\newtheorem{theorem}{Theorem}[section]
\newtheorem{proposition}[theorem]{Proposition} 
\newtheorem{corollary}[theorem]{Corollary}
\newtheorem{lemma}[theorem]{Lemma}
\numberwithin{equation}{section}
\begin{document}
\title[Type III$_1$ Factors Arising As Free Products]{On Type III$_1$ Factors Arising as Free Products}
\author[Y. Ueda]
{Yoshimichi UEDA}
\address{
Graduate School of Mathematics \\
Kyushu University \\ 
Fukuoka, 819-0395, Japan
}
\email{ueda@math.kyushu-u.ac.jp}
\thanks{Supported by Grant-in-Aid for Scientific Research (C) 20540213.}
\thanks{AMS subject classification: Primary:\ 46L54;
secondary:\ 46B10.}
\thanks{Keywords: von Neumann algebra, Free product, Almost periodic state, Sd-invariant, $\tau$-invariant.}

\maketitle

\begin{abstract}Type III$_1$ factors arising as (direct summands of) von Neumann algebraic free products are investigated. In particular we compute Connes' Sd- and $\tau$- invariants for those type III$_1$ factors without any extra assumption.
\end{abstract}

\allowdisplaybreaks{

\section{Introduction} Let $M_1$ and $M_2$ be $\sigma$-finite von Neumann algebras equipped with faithful normal states $\varphi_1$ and $\varphi_2$, respectively. The von Neumann algebraic free product $(M,\varphi)$ of $(M_1,\varphi_1)$ and $(M_2,\varphi_2)$ has been  investigated so far by many hands. The questions of factoriality, Murray--von Neumann--Connes type classification and fullness for $M$ are fundamental, but it is quite recent that they are solved completely. The complete solution \cite{Ueda:Preprint10} is as follows. When $M_1 \neq \mathbb{C} \neq M_2$, the resulting free product von Neumann algebra $M$ is always of the form $M_d\oplus M_c$ of multi-matrix algebra $M_d$ and diffuse von Neumann algebra $M_c$ possibly with $M=M_c$ or equivalently $M_d=0$. The center of $M$ and the structure of $(M_d, \varphi|_{M_d})$ are described explicitly. If $\mathrm{dim}(M_1)=2=\mathrm{dim}(M_2)$, then $M_c \cong L^\infty[0,1]\otimes M_2(\mathbb{C})$ and the free product state $\varphi$ is tracial; otherwise $M_c$ is always a factor of type II$_1$ or III$_\lambda$ ($\lambda\neq 0$) with the T-set formula $T(M_c) = \{t \in \mathbb{R}\,|\,\sigma_t^{\varphi_1} = \mathrm{Id} = \sigma_t^{\varphi_2}\}$ and $M_c'\cap M_c^\omega = \mathbb{C}$. 

Many fundamental questions on $M_c$ still remain unsolved, and some of them were stated in \cite[\S5]{Ueda:Preprint10}. The main purpose of the present notes is to study some of those, and thus this means that the present work may be regarded as a continuation of \cite{Ueda:Preprint10}. The most interesting question is roughly how the free product von Neumann algebra $M$ `remembers' the given states $\varphi_i$, $i=1,2$. In view of this we are interested in when $((M_c)_{\varphi|_{M_c}})'\cap M_c^\omega$ becomes trivial. It was already established, see \cite[Remark 4.2 (4)]{Ueda:Preprint10}, that this is the case at least when both $M_1$ and $M_2$ are of type I with discrete center. In the present notes we show that this is always the case if the given $\varphi_1$ and $\varphi_2$ are almost periodic. This in particular shows that the Sd-invariant $\mathrm{Sd}(M_c)$ is exactly the multiplicative group algebraically generated by the point spectra of the modular operators $\Delta_{\varphi_i}$, $i=1,2$, under the same hypothesis on $\varphi_1$ and $\varphi_2$ as above together with the separability of preduals. We also give an opposite fact, that is, any finite von Neumann algebra (allowed to be any multi-matrix algebra) can be the centralizer of the free product state on a certain free product type III$_1$ factor (which depends on a given finite von Neumann algebra). Remark that the modular operator associated with the free product state that we construct has no eigenvalue except $1$. Therefore the next task should be to clarify when $M_c$ has an almost periodic state. In fact, there may a priori exist an almost periodic state on $M_c$ that is different from the free product state $\varphi|_{M_c}$. For the question we compute the $\tau$-invariant $\tau(M_c)$ introduced by Connes \cite{Connes:JFA74} in terms of given data {\it without any extra assumption} (except the separability of preduals). This is nothing but a final result in the direction, generalizing all the previously known results due to Shlyakhtenko \cite[Corollary 8.4]{Shlyakhtenko:JFA} (based on Barnett's work \cite[Theorem 11]{Barnett:PAMS95}) and Vaes \cite[Appendix 4]{Vaes:Bourbaki05}. Our result on $\tau(M_c)$ implies, under the separability assumption of preduals again, that $M_c$ possesses an almost periodic state or weight if and only if the given $\varphi_1$ and $\varphi_2$ are almost periodic, or equivalently so is $\varphi$.   

The notations we employ in the present notes entirely follow our previous paper \cite{Ueda:Preprint10}. Basics on von Neumann algebraic free products and ultraproducts of von Neumann algebras are summarized in \cite[\S2]{Ueda:Preprint10}. We refer to \cite{Takesaki:Book2} for standard theory on von Neumann algebras including modular theory, and to Connes' paper \cite{Connes:JFA74} (and also Shlyakhtenko \cite{Shlyakhtenko:TAMS05}) for general scheme of analysis on full factors of type III$_1$.  

\section{Relative Commutant $((M_c)_{\varphi|_{M_c}})'\cap M_c^\omega$ and Sd-invariant $\mathrm{Sd}(M_c)$} 

Firstly we establish that any free product state of almost periodic ones satisfies a very strong ergodicity property. We begin with the next lemma. It should be a folklore, but we give a proof for the sake of completeness.  

\begin{lemma}\label{L-2.1} Let $N$ be a $\sigma$-finite von Neumann algebra equipped with an almost periodic state $\psi$. If $N$ is diffuse, then the centralizer $N_\psi$ must be diffuse. 
\end{lemma}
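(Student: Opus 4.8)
The plan is to reduce to the case of a trivial centralizer by cutting down with a minimal projection, and then to show that almost periodicity forces such an algebra to be one-dimensional. Concretely, suppose toward a contradiction that $N_\psi$ is not diffuse and fix a nonzero minimal projection $p\in N_\psi$. Since $p$ is fixed by $\sigma^\psi$, the reduced state $\psi_p := \psi(p\,\cdot\,p)/\psi(p)$ on $pNp$ has modular group $\sigma_t^{\psi_p}=\sigma_t^\psi|_{pNp}$, so $\psi_p$ is again almost periodic and $(pNp)_{\psi_p}=pN_\psi p=\mathbb{C}p$. Moreover $pNp$ is still diffuse, because a minimal projection of $pNp$ would be a minimal projection of $N$. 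Thus it suffices to prove the following: \emph{if $\psi$ is an almost periodic faithful normal state with $N_\psi=\mathbb{C}$, then $N=\mathbb{C}$.} This contradicts diffuseness of $pNp$ and finishes the lemma.

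For the reduced statement I would exploit the spectral (eigenvalue) decomposition coming from almost periodicity. Let $G\subseteq\mathbb{R}_+^\times$ be the subgroup generated by the point spectrum of $\Delta_\psi$, let $K:=\widehat{G}$ be its (compact) Pontryagin dual, and use that almost periodicity of $\psi$ means $\sigma^\psi$ extends to a continuous action $\alpha\colon K\curvearrowright N$ whose fixed-point algebra is exactly $N_\psi$. For $\lambda\in G$ write $N_\lambda:=\{x\in N:\sigma_t^\psi(x)=\lambda^{it}x\ (t\in\mathbb{R})\}$ for the spectral subspaces, so that $N_\lambda N_\mu\subseteq N_{\lambda\mu}$, $N_\lambda^*=N_{\lambda^{-1}}$, and $N_1=N_\psi$. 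Averaging the implementing unitaries of $\alpha$ against the character $\lambda$ shows that the $L^2$-extension of the spectral projection onto $N_\lambda$ is the eigenprojection $e_\lambda$ of $\Delta_\psi$ for the eigenvalue $\lambda$, and hence that $\overline{N_\lambda\,\xi_\psi}=\ker(\Delta_\psi-\lambda)$, where $\xi_\psi$ is the GNS vector. In particular $N_\lambda\neq 0$ precisely for $\lambda\in G$, and $\bigoplus_{\lambda\in G}N_\lambda$ is $\sigma$-weakly dense in $N$.

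The punchline uses $N_\psi=\mathbb{C}$. Given $\lambda\in G$ and $0\neq x\in N_\lambda$, one has $x^*x\in N_{\lambda^{-1}}N_\lambda\subseteq N_1=\mathbb{C}$ and likewise $xx^*\in\mathbb{C}$; normalizing, $x$ is a scalar multiple of a unitary $u_\lambda\in N_\lambda$, and since any two elements of $N_\lambda$ have product in $\mathbb{C}$ we get $N_\lambda=\mathbb{C}u_\lambda$. Therefore $N$ is the closed span of the unitaries $\{u_\lambda\}_{\lambda\in G}$ with $u_\lambda u_\mu=\sigma(\lambda,\mu)\,u_{\lambda\mu}$ for a $2$-cocycle $\sigma\colon G\times G\to\mathbb{T}$, i.e.\ a twisted group von Neumann algebra of $G$. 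Because distinct eigenspaces of $\Delta_\psi$ are orthogonal, $\psi(u_\lambda)=\langle u_\lambda\xi_\psi,\xi_\psi\rangle=\delta_{\lambda,1}$, which is exactly the canonical trace on $L_\sigma(G)$; checking the symmetry $\sigma(\lambda,\lambda^{-1})=\sigma(\lambda^{-1},\lambda)$ (forced by $u_\lambda^*\in\mathbb{C}u_{\lambda^{-1}}$ and $u_\lambda^*u_\lambda=u_\lambda u_\lambda^*=1$) confirms that $\psi$ is tracial. But then $\sigma^\psi=\mathrm{Id}$, so $N_\psi=N$; combined with $N_\psi=\mathbb{C}$ this gives $N=\mathbb{C}$, as desired.

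The main obstacle is organizing the spectral-subspace bookkeeping cleanly: namely verifying via the $K$-averaging that each $N_\lambda$ with $\lambda$ in the point spectrum is nonzero, that the $N_\lambda$ span a dense subalgebra, and that almost periodicity genuinely descends to the corner $pNp$ (equivalently, that $\Delta_{\psi_p}$ is diagonalizable when $\Delta_\psi$ is). Once these structural facts are in place, the identification of $\psi$ with the canonical trace of a twisted group algebra and the resulting contradiction are routine.
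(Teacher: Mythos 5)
Your proof is correct, and its first half---cutting down by a minimal projection $p\in N_\psi$, noting that $\sigma_t^{\psi_p}=\sigma_t^\psi|_{pNp}$ so that $\psi_p$ is again almost periodic with $(pNp)_{\psi_p}=pN_\psi p=\mathbb{C}p$, and that $pNp$ stays diffuse---is exactly the paper's reduction. Where you diverge is the endgame. The paper stops at the \emph{first} unitary eigenvector: since $eNe\neq\mathbb{C}e$ and $\Delta_{\psi|_{eNe}}$ is diagonalizable, some eigenvalue $\lambda_0\neq 1$ must occur; \cite[Lemma 1.12]{Takesaki:ActaMath73} produces a partial isometry $v$ with $\sigma_t^{\psi|_{eNe}}(v)=\lambda_0^{it}v$, which is unitary because $v^*v,vv^*\in\mathbb{C}e$; and then the scaling identity $\psi|_{eNe}(vv^*)=\lambda_0\,\psi|_{eNe}(v^*v)$ from \cite[Lemma 1.6]{Takesaki:ActaMath73} gives $\psi(e)=\lambda_0\psi(e)$, an immediate contradiction. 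You instead reconstruct the whole corner as a twisted group von Neumann algebra over the group of eigenvalues, identify $\psi$ with its canonical trace, and conclude $\sigma^\psi=\mathrm{Id}$. Both arguments hinge on the same observation (a trivial centralizer forces spectral subspaces to be spanned by unitaries), but yours requires the heavier input of the full eigenspace decomposition---density of the span of the $N_\lambda$'s via averaging over $\widehat{G}$---whereas the paper needs only one eigenvector and no density statement. What your route buys is a structural picture (an algebra carrying an almost periodic state with trivial centralizer is a twisted group algebra of a subgroup of $\mathbb{R}_+^\times$, which then degenerates to $\mathbb{C}$); what it costs is length, since once you have a single unitary $u_\lambda$ with $\lambda\neq1$ you could finish in one line by the same scaling identity, $1=\psi(u_\lambda u_\lambda^*)=\lambda\,\psi(u_\lambda^*u_\lambda)=\lambda$, bypassing the cocycle and traciality bookkeeping entirely.

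Two small inaccuracies, neither fatal. First, your parenthetical claim that $N_\lambda\neq0$ \emph{precisely} for $\lambda\in G$ (the group \emph{generated} by the point spectrum) is unjustified: a priori the point spectrum of $\Delta_\psi$ need not be a group when the algebra is not a factor (direct sums give counterexamples), and $pNp$ has no reason to be a factor here. This is harmless, because you only ever use nonzero elements of $N_\lambda$, and under the hypothesis $N_\psi=\mathbb{C}$ the set $\{\lambda: N_\lambda\neq0\}$ \emph{becomes} a group automatically, each nonzero $N_\lambda$ containing a unitary. Second, traciality of $\psi$ on all of $N$ (not merely on the $*$-algebra spanned by the $u_\lambda$'s) requires the routine normality-plus-$\sigma$-weak-density argument; it deserves a sentence rather than being absorbed into ``routine.''
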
 
\begin{proof} On contrary, suppose that there is a minimal nonzero $e \in N_\psi^p$. By the characterization of modular automorphisms \cite[Theorem VIII.1.2]{Takesaki:Book2} we have $\sigma_t^{\psi|_{eNe}} = \sigma^\psi|_{eNe}$, $t \in \mathbb{R}$, so that $(eNe)_{\psi|_{eNe}} = eN_\psi e = \mathbb{C}e$. By  \cite[Lemma 3.7.5.(c)]{Connes:ASENS73} $\psi|_{eNe}$ is almost periodic again, and thus $\Delta_{\psi|_{eNe}} = \sum_{\lambda>0} \lambda\,E_{\Delta_{\psi|_{eNe}}}(\{\lambda\})$. Since $N$ is diffuse, $eNe \supsetneqq \mathbb{C}e  = (eNe)_{\psi|_{eNe}}$ and thus $E_{\Delta_{\psi|_{eNe}}}(\{\lambda_0\}) \neq 0$ for some $\lambda_0 \neq 1$ (otherwise $\Delta_{\psi|_{eNe}}$ must be $1$; implying $eNe = (eNe)_{\psi|_{eNe}} = \mathbb{C}e$, a contradiction). By \cite[Lemma 1.12]{Takesaki:ActaMath73} there is a non-zero partial isometry $v \in eNe$ so that $\sigma_t^{\psi|_{eNe}}(v) = \lambda_0^{it}v$. Since $v^* v, vv^* \in (eNe)_{\psi|_{eNe}} = \mathbb{C}e$, the partial isometry $v$ is actually a unitary in $eNe$. However, by e.g.~\cite[Lemma 1.6]{Takesaki:ActaMath73} one has $\psi(e) = \psi|_{eNe}(vv^*) = \lambda_0\psi|_{eNe}(v^* v) = \lambda_0\psi(e)$, which is impossible due to $\psi(e) \neq 0$ and $\lambda_0 \neq 1$. Hence we are done. 
\end{proof} 

The above proposition with the proof of \cite[Theorem 4.1]{Ueda:Preprint10} enables us to show the next theorem.  

\begin{theorem}\label{T-2.2} Let $M_1$ and $M_2$ be non-trivial $\sigma$-finite von Neumann algebras equipped with faithful normal states $\varphi_1$ and $\varphi_2$, respectively. Denote by $(M,\varphi)$ their free product, and assume that $\mathrm{dim}(M_1) + \mathrm{dim}(M_2) \geq 5$. Then, if both $\varphi_1$ and $\varphi_2$ are almost periodic, then the restriction of $\varphi$ to the diffuse factor part $M_c$ {\rm(}see \S1{\rm)} must be almost periodic and satisfy the very strong ergodicity $((M_c)_{\varphi|_{M_c}})'\cap M_c^\omega = \mathbb{C}$. 
\end{theorem}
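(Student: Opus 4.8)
The plan is to obtain the two assertions separately, using Lemma~\ref{L-2.1} as the bridge between them.

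First I would establish that $\varphi|_{M_c}$ is almost periodic. Since the free product construction realizes the GNS representation of $(M,\varphi)$ on the free product of the GNS spaces of the $(M_i,\varphi_i)$, the modular operator $\Delta_\varphi$ is assembled from the $\Delta_{\varphi_i}$ in such a way that its point spectrum is the multiplicative subgroup of $\mathbb{R}_+^\times$ generated by the point spectra of $\Delta_{\varphi_1}$ and $\Delta_{\varphi_2}$; in particular $\Delta_\varphi$ is diagonalizable, so $\varphi$ is almost periodic whenever both $\varphi_i$ are. The central projection $z$ with $M_c=zM$ is fixed by $\sigma^\varphi$ (the modular flow fixes the center), hence $z\in M_\varphi$, and Connes' restriction lemma \cite[Lemma~3.7.5.(c)]{Connes:ASENS73} then yields that $\varphi|_{M_c}$ is again almost periodic. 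Because $M_c$ is a diffuse factor under the standing hypothesis $\mathrm{dim}(M_1)+\mathrm{dim}(M_2)\geq 5$ (see \S1), Lemma~\ref{L-2.1} applies and gives that the centralizer $(M_c)_{\varphi|_{M_c}}$ is \emph{diffuse}. This diffuseness is the crucial extra input that almost periodicity buys us.

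For the very strong ergodicity I would re-run the proof of \cite[Theorem~4.1]{Ueda:Preprint10}, which already establishes the fullness $M_c'\cap M_c^\omega=\mathbb{C}$. That argument takes a bounded sequence $(x_n)$ representing an element of the relative commutant, normalized by $\varphi(x_n)=0$, and exploits the free independence of $M_1\ominus\mathbb{C}1$ and $M_2\ominus\mathbb{C}1$ inside $M$: conjugating $(x_n)$ by suitable $\varphi$-centered unitaries and invoking a free-probabilistic $L^2$-estimate forces $x_n\to 0$ strongly. To prove Theorem~\ref{T-2.2} I would feed into the very same machine a bounded sequence $(x_n)$ representing an element of $((M_c)_{\varphi|_{M_c}})'\cap M_c^\omega$, so that now $[x_n,u]\to 0$ only for $u$ in the centralizer. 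The point is that the diffuseness of $(M_c)_{\varphi|_{M_c}}$ furnished above supplies, \emph{inside the centralizer}, a diffuse abelian subalgebra and hence the $\varphi$-centered unitaries needed to drive the averaging estimate; since these unitaries already lie in the centralizer, the weaker commutation hypothesis is exactly what the estimate consumes, and the conclusion $x_n\to 0$ goes through as before.

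The main obstacle I anticipate is verifying that the quantitative estimate at the core of \cite[Theorem~4.1]{Ueda:Preprint10} is not degraded when the conjugating unitaries are constrained to lie in the centralizer. In the fullness proof one may use arbitrary unitaries of $M_c$, including modular eigenoperators lying outside $(M_c)_{\varphi|_{M_c}}$; replacing these by centralizer unitaries is legitimate precisely because Lemma~\ref{L-2.1} guarantees that the centralizer is large enough to separate the free components, but one must check that the free-independence computation still yields the same decay and that no spectral contribution of $\Delta_\varphi$ away from $1$ obstructs the $L^2$-bound. Confirming this compatibility is where the real work lies; everything else mirrors the bookkeeping of \cite{Ueda:Preprint10}.
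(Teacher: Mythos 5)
Your first assertion (almost periodicity of $\varphi|_{M_c}$) is argued essentially as in the paper: Dykema's explicit description of $\Delta_\varphi$ gives a total set of eigenvectors, and Connes' restriction lemma passes to the direct summand $M_c$. The gap is in the second assertion, and it is fatal to the strategy as you have set it up. The only consequence of almost periodicity that your argument consumes is the diffuseness of $(M_c)_{\varphi|_{M_c}}$ obtained by applying Lemma \ref{L-2.1} to $M_c$ itself. But diffuseness of the centralizer of the free product state, together with the free product structure of $M$, \emph{cannot} imply very strong ergodicity: Proposition \ref{P-2.4} of this very paper, applied with $N = L^\infty[0,1]$, produces a free product type III$_1$ factor $M = M_c$ whose free-product-state centralizer $M_\varphi$ is exactly $L^\infty[0,1]$ --- diffuse --- and yet $(M_\varphi)'\cap M^\omega \supseteq \mathcal{Z}(M_\varphi) = M_\varphi \neq \mathbb{C}$. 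So any argument whose inputs are exactly yours proves a false statement; almost periodicity must enter in a more structural way than merely supplying a diffuse centralizer.

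The technical reason your ``re-run the machine'' step fails is the one you flagged but dismissed as bookkeeping: the Barnett-type $L^2$-estimates underlying \cite[Theorem 3.7]{Ueda:Preprint10} require the conjugating unitaries to be $\varphi$-centered elements \emph{of the free components} (or of their centralizers), so that products form alternating words and free independence yields the orthogonality driving the decay. A Haar unitary taken from some diffuse abelian subalgebra of $(M_c)_{\varphi|_{M_c}}$ has no controlled position relative to the decomposition $M_1 \star M_2$, and the free-independence computation simply does not apply to it. What one actually needs are unitaries lying simultaneously in the centralizer (so the weakened commutation hypothesis bites) and in the free components (so the estimate runs); this is precisely the hypothesis of the latter assertion of \cite[Theorem 3.7]{Ueda:Preprint10} --- one component with diffuse centralizer, the other with non-trivial centralizer. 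Accordingly, the paper applies Lemma \ref{L-2.1} not to $M_c$ but to the diffuse parts $M_{ic}$ of the free components: writing $p = 1_{M_{1c}}$, it uses Dykema's free etymology to exhibit $pMp$ as the free product of $M_{1c}$ (whose restricted-state centralizer is diffuse by Lemma \ref{L-2.1}) and a corner $pNp$ of $N = (M_{1d}\oplus\mathbb{C}p)\vee M_2$ with non-trivial centralizer, obtains $((pMp)_{\varphi|_{pMp}})'\cap (pMp)^\omega = \mathbb{C}p$ from \cite[Theorem 3.7]{Ueda:Preprint10}, and then propagates this from the corner to all of $M_c$ by proving the central support identity $c_{(M_c)_{\varphi|_{M_c}}}(p) = 1_{M_c}$ through a case analysis based on the structure theorem \cite[Theorem 4.1]{Ueda:Preprint10}. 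That compression-and-central-support mechanism, absent from your proposal, is where the real content of the proof lies.
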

\begin{proof} The first assertion follows from \cite[Proposition 4.2]{Dykema:FieldsInstituteComm97} (together with \cite[Lemma 3.7.5.(c)]{Connes:ASENS73} when $M\neq M_c$). In fact, the explicit description of the modular operator $\Delta_\varphi$ associated with $\varphi$ (which is explained in \cite[\S1]{Dykema:Crelle94}) enables us to find a total subset consisting of its eigenvectors in the GNS Hilbert space associated with $\varphi$. Thus it suffices to show the latter assertion, i.e., $((M_c)_{\varphi|_{M_c}})'\cap M_c^\omega = \mathbb{C}$. 

Decompose $M_i = M_{id}\oplus M_{ic}$ into the `type I with discrete center' part and the diffuse part, $i=1,2$. We already confirmed, in \cite[Remark 4.2 (4)]{Ueda:Preprint10}, that $((M_c)_{\varphi|_{M_c}})'\cap M_c^\omega = \mathbb{C}$ at least when both $M_i = M_{id}$, $i=1,2$, hold. In the case that either $M_1 = M_{1c}$ or $M_2 = M_{2c}$ the desired $(M_\varphi)'\cap M^\omega = \mathbb{C}$ holds by the latter assertion of \cite[Theorem 3.7]{Ueda:Preprint10} since one free component has the diffuse centralizer and the other does the non-trivial one by Lemma \ref{L-2.1}.  
Thus it suffices to consider the following three cases: 
\begin{itemize} 
\item[(i)] $M_1 = M_{1d}\oplus M_{1c}$ and $M_2 = M_{2d}$, 
\item[(ii)] $M_1 = M_{1d}$ and $M_2 = M_{2d}\oplus M_{2c}$, 
\item[(iii)] $M_1 = M_{1d}\oplus M_{1c}$ and $M_2 = M_{2d}\oplus M_{2c}$.  
\end{itemize} 

Firstly we deal with the cases (i),(ii), but it suffices to consider only (i) by symmetry. Set $N := (M_{1d}\oplus\mathbb{C}1_{M_{1c}})\vee M_2$ inside $M$, which is the free product von Neumann algebra of two type I von Neumann algebras with discrete center. Write $p := 1_{M_{1c}}$ for short. By Dykema's free etymology technique (see e.g.~\cite[Lemma 2.2]{Ueda:Preprint10}) we have 
$$ 
(pMp,(1/\varphi_1(p))\varphi|_{pMp}) 
=
(M_{1c},(1/\varphi_1(p))\varphi_1|_{M_{1c}})\star
(pNp,(1/\varphi_1(p))\varphi_{pNp})   
$$
with $c_M(p) = c_N(p)$. By \cite[Theorem 4.1 and Remark 4.2 (4)]{Ueda:Preprint10} (or the structure theorem for two freely independent projections, see \cite[Example 3.6.7]{VDN} and \cite[Theorem 1.1]{Dykema:DukeMathJ93}) $N = N_d\oplus N_c$, the `type I with discrete center' part and the diffuse part, so that $(N_c)_{\varphi|_{N_c}}$ is diffuse, and moreover $p= p_d \oplus p_c \in (N_d\oplus N_c)\cap N_{\varphi|_N}$ satisfies that $c_{(N_c)_{\varphi|_{N_c}}}(p_c) = 1_{N_c}$ and either $p_d = 0$, $p_d = 1_{N_d}$, or $p_d$ is minimal and central in $N_d$. 
In particular, $(pNp)_{\varphi|_{pNp}} \neq \mathbb{C}p$. By Lemma \ref{L-2.1} $(M_{1c})_{\varphi_1|_{M_{1c}}}$ is diffuse. Therefore the latter assertion of \cite[Theorem 3.7]{Ueda:Preprint10} shows that $(p((M_c)_{\varphi|_{M_c}})p)' \cap (p M_c^\omega p) = ((pMp)_{\varphi|_{pMp}})'\cap (pMp)^\omega = \mathbb{C}p$ ({\it n.b.}~$p \in M_c$ by \cite[Theorem 4.1]{Ueda:Preprint10}). Consequently, if $c_{(M_c)_{\varphi|_{M_c}}}(p) = 1_{M_c}$ was once confirmed, then $((M_c)_{\varphi|_{M_c}})'\cap M_c^\omega = \mathbb{C}$ would immediately follow (see the beginning of the proof of \cite[Theorem 4.1]{Ueda:Preprint10}). When $p_d = 0$, then \cite[Theorem 4.1]{Ueda:Preprint10} (the explicit description of multi-matrix part) shows that $M_d = N_d$ (i.e., $1_{N_c} = 1_{M_c}$) and $\mathcal{Z}(M_c) = \mathbb{C}1_{M_c}$, implying $c_{(M_c)_{\varphi|_{M_c}}}(p) \geq c_{(N_c)_{\varphi|_{N_c}}}(p) = 1_{N_c} = 1_{M_c} = c_{M_c}(p)$ so that $c_{(M_c)_{\varphi|_{M_c}}}(p) = 1_{M_c}$. For the other cases we firstly observe that $c_{N_{\varphi|_N}}(p) = c_{(N_d)_{\varphi|_{N_d}}}(p_d)\oplus c_{(N_c)_{\varphi|_{N_c}}}(p_c) = c_{(N_d)_{\varphi|_{N_d}}}(p_d)\oplus 1_{N_c}$. When $p_d = 1_{N_d}$, one has $c_{N_{\varphi|_N}}(p) = 1$, implying $c_{M_\varphi}(p) = 1$ ({\it n.b.} $M=M_c$ in this case). When $p_d$ is minimal and central, \cite[Theorem 4.1]{Ueda:Preprint10} says that $M_d = N_d(1-p_d)$, and $c_{M_c}(p) = p_d\oplus 1_{N_c} = c_{N_\varphi}(p) = c_{\mathbb{C}p_d\oplus(N_c)_{\varphi|_{N_c}}}(p)$ by the above. Since $\mathbb{C}p_d\oplus(N_c)_{\varphi|_{N_c}}$ sits in $(M_c)_{\varphi|_{M_c}}$, we get $c_{(M_c)_{\varphi|_{M_c}}}(p) = p_d \oplus 1_{N_c} = 1_{M_c}$. Hence we are done in the cases (i),(ii). 

The exactly same argument as in the cases (i),(ii) with replacing \cite[Theorem 4.1, Remark 4.2 (4)]{Ueda:Preprint10} by the case (ii) (or (i)) shows that $((M_c)_{\varphi|_{M_c}})'\cap M_c^\omega = \mathbb{C}$ in the case (iii) too. The details are left to the reader. 
\end{proof}  

If $M_1$ and $M_2$ are assumed to have separable preduals, then automorphism analysis in \cite[\S4]{Connes:JFA74} is available, and the above theorem together with the explicit description of the modular operator associated with $\varphi$ implies the next corollary.  

\begin{corollary}\label{C-2.3} Let $M_1$ and $M_2$ be non-trivial von Neumann algebras with separable preduals, equipped with faithful normal states $\varphi_1$ and $\varphi_2$, respectively. Denote by $(M,\varphi)$ their free product, and assume that $\mathrm{dim}(M_1) + \mathrm{dim}(M_2) \geq 5$. If both $\varphi_1$ and $\varphi_2$ are almost periodic, then the Sd-invariant $\mathrm{Sd}(M_c)$ of the diffuse factor part $M_c$ is exactly the multiplicative group algebraically generated by the point spectra of $\Delta_{\varphi_i}$, $i=1,2$. 
\end{corollary}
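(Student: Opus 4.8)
The plan is to pass through Connes' structure theory of almost periodic states and reduce everything to an explicit eigenvalue computation. First I would note that separability of the preduals of $M_1$ and $M_2$ passes to $M$ and hence to the direct summand $M_c$. Next, by Theorem \ref{T-2.2} the restriction $\psi := \varphi|_{M_c}$ is a faithful normal almost periodic state on $M_c$ satisfying the very strong ergodicity $((M_c)_\psi)'\cap M_c^\omega = \mathbb{C}$; in particular $M_c$ is a full factor. These are precisely the hypotheses under which the automorphism analysis of \cite[\S4]{Connes:JFA74} applies, and it identifies the subgroup $\Gamma(\psi)\subseteq\mathbb{R}_+^*$ algebraically generated by the point spectrum of $\Delta_\psi$ with the invariant $\mathrm{Sd}(M_c)$, independently of the chosen almost periodic state. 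Thus the whole problem is reduced to computing $\Gamma(\psi)$.

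The second step is to determine $\Gamma(\varphi)$ on the whole free product via the explicit description of $\Delta_\varphi$ from \cite[\S1]{Dykema:Crelle94} already invoked in the proof of Theorem \ref{T-2.2}. On the free-product GNS space an alternating centered word $a_1 a_2\cdots a_n$ whose letters $a_j\in M_{i_j}$ are eigenvectors of $\Delta_{\varphi_{i_j}}$ for eigenvalues $\lambda_j$ is again an eigenvector, now for $\prod_j\lambda_j$, because $\sigma_t^\varphi$ acts letterwise through the $\sigma_t^{\varphi_{i_j}}$. Since such words are total and single letters already exhaust the eigenvalues of each $\Delta_{\varphi_i}$, the point spectrum of $\Delta_\varphi$ generates exactly the multiplicative group $G := \langle \mathrm{sp}_p(\Delta_{\varphi_1})\cup\mathrm{sp}_p(\Delta_{\varphi_2})\rangle$, that is $\Gamma(\varphi)=G$.

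The final and most delicate step is to descend from $\varphi$ to $\psi=\varphi|_{M_c}$. Because $1_{M_c}$ is a central projection of $M$ fixed by $\sigma^\varphi$, there is an orthogonal splitting $\Delta_\varphi = \Delta_{\varphi|_{M_d}}\oplus\Delta_\psi$, so that $\mathrm{sp}_p(\Delta_\psi)\subseteq\mathrm{sp}_p(\Delta_\varphi)$ and hence $\Gamma(\psi)\subseteq G$. For the reverse inclusion I must show that each generator $\lambda\in\mathrm{sp}_p(\Delta_{\varphi_i})$ is realized by eigenvectors supported on the diffuse summand $M_c$ rather than trapped in the multi-matrix summand $M_d$, and this is the main obstacle. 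I expect to resolve it by combining the explicit description of $M_d$ in \cite[Theorem 4.1]{Ueda:Preprint10} — it is finite-dimensional in each block and its own modular eigenvalues are ratios already built from the $\lambda$'s — with the diffuseness of $(M_c)_\psi$ from Theorem \ref{T-2.2}: given a word eigenvector $w$ for $\lambda$ and any nonzero $u\in (M_c)_\psi$, centrality of $1_{M_c}$ forces $uw\in M_c$, while $\sigma_t^\psi(u)=u$ keeps $uw$ an eigenvector for $\lambda$, so it suffices to produce, for each generator, one word eigenvector with nonzero $M_c$-component. Carrying out this bookkeeping, so that no generator is lost into $M_d$, yields $\Gamma(\psi)=G$ and therefore $\mathrm{Sd}(M_c)=G$, as claimed.
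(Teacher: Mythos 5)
Your proposal takes essentially the same route as the paper's proof: reduce $\mathrm{Sd}(M_c)$ to the point spectrum of $\Delta_{\varphi|_{M_c}}$ via \cite[Lemma 4.8]{Connes:JFA74} (fullness plus factoriality of the centralizer, supplied by Theorem \ref{T-2.2}), compute the point spectrum of $\Delta_\varphi$ from Dykema's description, split $\Delta_\varphi$ along the central decomposition $M=M_d\oplus M_c$, and finally check that each eigenvalue $\lambda\neq 1$ of some $\Delta_{\varphi_i}$ admits an eigenvector with nonzero $M_c$-component. The ``bookkeeping'' you defer is exactly the step the paper settles by choosing a single-letter eigenvector $x\in M_i$ via \cite[Lemma 1.12]{Takesaki:ActaMath73} and reading $x_c\neq 0$ off the explicit description of $M_d$ in \cite[Theorem 4.1]{Ueda:Preprint10}; note also that your extra multiplication by $u\in(M_c)_{\varphi|_{M_c}}$ is superfluous, since $x_c=1_{M_c}x$ is itself the required eigenvector in $M_c$.
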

\begin{proof} We have known that $M_c$ is full and $(M_c)_{\varphi|_{M_c}}$ a factor. Hence $\mathrm{Sd}(M_c)$ is exactly the point spectrum of the modular operator $\Delta_{\varphi|_{M_c}}$ associated with $\varphi|_{M_c}$ thanks to \cite[Lemma 4.8]{Connes:JFA74}. It is plain to see, by the explicit description of $\Delta_\varphi$, that the point spectrum of $\Delta_\varphi$ is the multiplicative group algebraically generated by those of $\Delta_{\varphi_i}$, $i=1,2$, see \cite[Proposition 4.2]{Dykema:FieldsInstituteComm97}. Thus the desired assertion follows if $M=M_c$. For the general case (i.e., $M=M_d\oplus M_c$ with $M_d \neq 0$) we need an exact relationship between $\Delta_\varphi$ and $\Delta_{\varphi|_{M_c}}$. 

By the characterization of modular automorphisms \cite[Theorem VIII.1.2]{Takesaki:Book2} one has $\sigma_t^{\varphi|_{M_c}} = \sigma_t^\varphi|_{M_c}$, $t \in \mathbb{R}$. Let $(M\curvearrowright \mathcal{H}_\varphi,\Lambda_\varphi)$ be the GNS representation associated with $\varphi$. Set $\mathcal{H}_0 := \overline{\Lambda_\varphi(M_c)}$ and $\Lambda_0 := \Lambda_\varphi|_{M_c} : M_c \rightarrow \mathcal{H}_0$. It is easy to see that the representation $M_c \curvearrowright \mathcal{H}_0$ with $\Lambda_0 : M_c \rightarrow \mathcal{H}_0$ can be identified with the GNS representation associated with $\varphi|_{M_c}$ so that we write $\mathcal{H}_{\varphi|_{M_c}} := \mathcal{H}_0$ and $\Lambda_{\varphi|_{M_c}} := \Lambda_0$. Denote by $P$ the projection from $\mathcal{H}_\varphi$ onto $\mathcal{H}_{\varphi|_{M_c}}$. As in (b) $\Rightarrow$ (a) of the proof of \cite[Theorem 7.1]{Takesaki:LNP20} one has $(1-2P)\Delta_\varphi(1-2P) = \Delta_\varphi$ so that $\Delta_\varphi$ is affiliated with $\{P\}'$, i.e., $\Delta_\varphi \eta \{P\}'$ on $\mathcal{H}_\varphi$. In particular, the spectral projection $E_{\Delta_\varphi}(-)$ of $\Delta_\varphi$ and $\Delta_\varphi^{it}$ ($t \in \mathbb{R}$) commute with $P$, and thus the restrictions $E_{\Delta_\varphi}(-)|_{\mathcal{H}_{\varphi|_{M_c}}}$ and $\Delta_\varphi^{it}|_{\mathcal{H}_{\varphi|_{M_c}}}$ to $\mathcal{H}_{\varphi|_{M_c}}$ are well-defined. Moreover the restriction $\Delta_\varphi|_{\mathcal{H}_{\varphi|_{M_c}}}$ to $\mathcal{H}_{\varphi|_{M_c}}$ is well-defined in the following sense: $\mathrm{Domain}(\Delta_\varphi|_{\mathcal{H}_{\varphi|_{M_c}}}) = \mathrm{Domain}(\Delta_\varphi) \cap \mathcal{H}_{\varphi|_{M_c}} = P(\mathrm{Domain}(\Delta_\varphi))$ and $(\Delta_\varphi|_{\mathcal{H}_{\varphi|_{M_c}}})\xi = \Delta_\varphi\xi$, $\xi \in \mathrm{Domain}(\Delta_\varphi|_{\mathcal{H}_{\varphi|_{M_c}}})$. In fact, $\xi \in \mathrm{Domain}(\Delta_\varphi|_{\mathcal{H}_{\varphi|_{M_c}}})$ if and only if $\xi \in \mathcal{H}_{\varphi|_{M_c}}$ and $\sum_{\lambda>0} \lambda^2 \Vert E_{\Delta_\psi}(\{\lambda\})\xi\Vert_{\mathcal{H}_\varphi}^2 < +\infty$. Moreover $\Delta_\varphi \eta \{P\}'$ implies $P\Delta_\varphi \subseteq \Delta_\varphi P$, and 
\begin{align*} 
((\Delta_\varphi|_{M_c})\xi|\zeta)_{\varphi|_{M_c}} = (\Delta_\varphi\xi|\zeta)_\varphi &= \sum_{\lambda>0} \lambda\,(E_{\Delta_\varphi}(\{\lambda\})\xi|\zeta)_\varphi \\ 
&= \sum_{\lambda>0}\lambda\,((E_{\Delta_\varphi}(\{\lambda\})|_{\mathcal{H}_{\varphi_{M_c}}})\xi|\zeta)_{\varphi|_{M_c}}
\end{align*} 
for $\xi \in \mathrm{Domain}(\Delta_\varphi|_{\mathcal{H}_{\varphi|_{M_c}}})$, $\zeta \in \mathcal{H}_{\varphi|_{M_c}}$. Those show that $\Delta_\varphi|_{\mathcal{H}_{\varphi|_{M_c}}} = (\Delta_\varphi|_{\mathcal{H}_{\varphi|_{M_c}}})^*$ and the spectral projection of $\Delta_\varphi|_{\mathcal{H}_{\varphi|_{M_c}}}$ is given by the restriction $E_{\Delta_\varphi}(-)|_{\mathcal{H}_{\varphi_{M_c}}}$. Hence, for every $x \in M_c$ and every $t \in \mathbb{R}$ we have 
\begin{align*} 
(\Delta_\varphi|_{\mathcal{H}_{\varphi|_{M_c}}})^{it}\Lambda_{\varphi|_{M_c}}(x) 
&= \Delta_\varphi^{it}\Lambda_\varphi(x) 
= \Lambda_\varphi(\sigma_t^\varphi(x))  \\
&= \Lambda_{\varphi|_{M_c}}(\sigma_t^{\varphi|_{M_c}}(x)) = (\Delta_{\varphi|_{M_c}})^{it}\Lambda_{\varphi|_{M_c}}(x), 
\end{align*} 
implying $\Delta_{\varphi|_{M_c}} = \Delta_\varphi|_{\mathcal{H}_{\varphi|_{M_c}}}$  and $E_{\Delta_{\varphi|_{M_c}}}(-) = E_{\Delta_\varphi}(-)|_{\mathcal{H}_{\varphi|_{M_c}}}$ by the uniqueness part of Stone's theorem. 

From the above fact it immediately follows that the point spectrum of $\Delta_{\varphi|_{M_c}}$ is contained in that of $\Delta_\varphi$, and we want to prove that they are exactly same. We have known that the point spectrum of $\Delta_\varphi$ is the multiplicative group algebraically generated by those of $\Delta_{\varphi_i}$, $i=1,2$, and also that the point spectrum of $\Delta_{\varphi|_{M_c}}$ is $\mathrm{Sd}(M_c)$ being a multiplicative group. Thus it suffices to prove that any eigenvalue of $\Delta_{\varphi_i}$, $i=1,2$, becomes an eigenvalue of $\Delta_{\varphi|_{M_c}}$. Let us choose an eigenvalue $\lambda \neq 1$ of $\Delta_{\varphi_i}$, $i=1,2$. By \cite[Lemma 1.12]{Takesaki:ActaMath73} we can choose a corresponding eigenvector of the form $\Lambda_{\varphi_i}(x)$, $x \in M_i$. It is plain to see that $\sigma_t^{\varphi_i}(x) = \lambda^{it}x$, $x \in \mathbb{R}$, and by using e.g.~\cite[Lemma 1.6]{Takesaki:ActaMath73} together with the formula `$\sigma_t^\varphi = \sigma_t^{\varphi_1}\star\sigma_t^{\varphi_2}$', $t \in \mathbb{R}$, we see that $x$ is an analytic element with respect to $\sigma^\varphi$ (indeed $z \in \mathbb{C} \mapsto \lambda^z x \in M$ is its unique analytic extension), and thus
$\Delta_\varphi\Lambda_\varphi(x) = \Lambda_\varphi(\sigma_{-i}^\varphi(x)) = \lambda\Lambda_\varphi(x)$ (see around \cite[Lemma VIII.2.4]{Takesaki:Book2}); saying that $\Lambda_\varphi(x)$ is an eigenvector of $\Delta_\varphi$ corresponding to $\lambda$. By the explicit description of the multi-matrix part $M_d$ in \cite[Theorem 4.1]{Ueda:Preprint10} one can easily see that $x = x_d\oplus x_c \in M_d\oplus M_c$ satisfies $x_c \neq 0$. 
Then $P\Lambda_\varphi(x) = \Lambda_\varphi(x_c) = \Lambda_{\varphi|_{M_c}}(x_c)$ gives a non-zero vector in $\mathcal{H}_{\varphi|_{M_c}}$, and $\Delta_{\varphi|_{M_c}}\Lambda_{\varphi|_{M_c}}(x_c) = \Delta_\varphi P\Lambda_\varphi(x) = P\Delta_\varphi\Lambda_\varphi(x) = \lambda P\Lambda_\varphi(x) = \lambda\Lambda_{\varphi|_{M_c}}(x_c)$. Thus $\lambda$ is an eigenvalue of $\Delta_{\varphi|_{M_c}}$.     
\end{proof}  

Or more less related to the above we are interested in the question: `Which finite von Neumann algebra can be the centralizer of a faithful normal state of a factor ?' In the direction Herman and Takesaki \cite{HermanTakesaki:CMP70} constructed the first example of the trivial centralizer. Connes \cite{Connes:unpublished73} showed that $L^\infty[0,1]$ can be the centralizer of an almost periodic state of an arbitrary Krieger factor (actually he proved that this phenomenon characterizes Krieger factors, or equivalently hyperfinite (diffuse) factors due to Connes--Haagerup classification theory for injective or hyperfinite factors). Connes and St{\o}rmer \cite{ConnesStormer:JFA78} showed that any non-type I factor with separable predual always has a faithful normal state whose centralizer is of type II$_1$. Haagerup and St{\o}rmer \cite[Theorem 11.1]{HaagerupStormer:Adv90} strengthened Connes--St{\o}mer's result, especially proved the same result for any $\sigma$-finite von Neumann algebra without type I component. It follows from Ozawa's solidity of $L(\mathbb{F}_\infty)$ \cite{Ozawa:ActaMath04} that a non-injective type III$_1$ factor in the class of free Araki--Woods factors  admits only injective centralizers. (See \cite{Houdayer:JInstMathJussieu10} for related results around this.) However it seems, to the best of our  knowledge, that the question is not yet explicitly answered. Probably many specialists believe that any finite von Neumann algebra can be. We would like to point out the next fact, which in particular shows that $((M_c)_{\varphi|_{M_c}})'\cap M_c^\omega = \mathbb{C}$ may fail to hold in general.   

\begin{proposition}\label{P-2.4} For a given finite von Neumann algebra $N$ with a faithful normal tracial state $\tau$ there is a type III$_1$ factor $M$ with a faithful normal state $\varphi$ such that the centralizer $M_\varphi$ with $\varphi|_{M_\varphi}$ is exactly $N$ with $\tau$. 
\end{proposition}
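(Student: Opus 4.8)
\emph{Proof proposal.} The plan is to produce $M$ as a free product in which $(N,\tau)$ is literally one free component and the second component is chosen so violently non-tracial that it contributes nothing to the centralizer. Concretely I would put $M_1:=N$, $\varphi_1:=\tau$, and take for $(M_2,\varphi_2)$ a type III$_1$ factor with separable predual carrying a faithful normal state whose centralizer is trivial, $(M_2)_{\varphi_2}=\mathbb{C}1$; the existence of a factor admitting such a trivial-centralizer state is the Herman--Takesaki phenomenon \cite{HermanTakesaki:CMP70}, and one may arrange it to be of type III$_1$. Then set $(M,\varphi):=(M_1,\varphi_1)\star(M_2,\varphi_2)$ and claim it is the desired pair.

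First I would record that $\Delta_{\varphi_2}$ has no eigenvalue but $1$: exactly as in Lemma \ref{L-2.1}, an eigenvalue $\lambda\neq1$ would, via \cite[Lemma 1.12]{Takesaki:ActaMath73}, produce a nonzero $v\in M_2$ with $v^*v,vv^*\in(M_2)_{\varphi_2}=\mathbb{C}1$, forcing $v$ to be a scalar multiple of a unitary and hence $\varphi_2=\lambda\varphi_2$, absurd. Consequently $\sigma_t^{\varphi_2}\neq\mathrm{Id}$ for $t\neq0$ (a period would force $\Delta_{\varphi_2}^{it_0}\in M_2'\cap M_2=\mathcal{Z}(M_2)=\mathbb{C}1$, whence $\Delta_{\varphi_2}$ would have discrete spectrum and, having no eigenvalue but $1$, would equal $1$, i.e. $M_2=\mathbb{C}$). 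Since $\varphi_1$ is tracial, the T-set formula recalled in \S1 then gives $T(M_c)=\{t:\sigma_t^{\varphi_2}=\mathrm{Id}\}=\{0\}$; as types II$_1$ and III$_0$ are excluded by the classification of \S1, $M_c$ is of type III$_1$. Because $(M_2)_{\varphi_2}=\mathbb{C}1$ has no projection besides $0,1$, the explicit multi-matrix summand of \cite[Theorem 4.1]{Ueda:Preprint10} is absent, so $M=M_c$ is a factor; and by Dykema's description \cite[Proposition 4.2]{Dykema:FieldsInstituteComm97} the point spectrum of $\Delta_\varphi$ is the group generated by those of $\Delta_{\varphi_1},\Delta_{\varphi_2}$, i.e. $\{1\}$, matching the asserted absence of eigenvalues.

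It remains to identify $(M_\varphi,\varphi|_{M_\varphi})$ with $(N,\tau)$. Since $\tau$ is a trace, $M_1\subseteq M_\varphi$, and there is a $\varphi$-preserving conditional expectation $E_{M_1}\colon M\to M_1$ commuting with $\sigma^\varphi$. Given $x\in M_\varphi$, the element $y:=x-E_{M_1}(x)$ again lies in $M_\varphi$ and satisfies $E_{M_1}(y)=0$, so $\Lambda_\varphi(y)$ is a $\Delta_\varphi^{it}$-fixed vector orthogonal to $\overline{M_1\Lambda_\varphi(1)}$. On the free-product GNS space $\Delta_\varphi$ acts on each reduced word $\xi_{i_1}\otimes\cdots\otimes\xi_{i_n}$ as $\Delta_{\varphi_{i_1}}\otimes\cdots\otimes\Delta_{\varphi_{i_n}}$, and every reduced word orthogonal to $\overline{M_1\Lambda_\varphi(1)}$ carries at least one $M_2$-letter; fixedness under $\Delta_\varphi^{it}$ forces that letter into the $1$-eigenspace of $\Delta_{\varphi_2}$ inside $\mathring{\mathcal{H}}_{\varphi_2}$. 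But the positive-cone form of Connes' Radon--Nikodym theorem gives $\mathcal{H}_{\varphi_2}^{\Delta_{\varphi_2}}=\overline{(M_2)_{\varphi_2}\Lambda_{\varphi_2}(1)}=\mathbb{C}\Lambda_{\varphi_2}(1)$, so that $1$-eigenspace inside $\mathring{\mathcal{H}}_{\varphi_2}$ is $\{0\}$. Hence $\Lambda_\varphi(y)=0$, i.e. $y=0$ and $x=E_{M_1}(x)\in M_1$. Therefore $M_\varphi=M_1=N$ and $\varphi|_{M_\varphi}=\varphi_1=\tau$.

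The step I expect to be most delicate is the spectral bookkeeping of the last paragraph: the identification $\mathcal{H}_{\varphi_2}^{\Delta_{\varphi_2}}=\overline{(M_2)_{\varphi_2}\Lambda_{\varphi_2}(1)}$, which I would extract from Connes' cocycle/Radon--Nikodym theorem by writing a positive modular-invariant vector as $h^{1/2}\Lambda_{\varphi_2}(1)$ with $h$ positive and affiliated to the centralizer, and its transport through the reduced-word decomposition of $\Delta_\varphi$. The only external input is the Herman--Takesaki building block; everything else is bookkeeping within Ueda's classification \cite{Ueda:Preprint10} and Dykema's modular description \cite{Dykema:FieldsInstituteComm97}.
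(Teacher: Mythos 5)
Your construction is in essence the paper's own: the paper likewise takes the free product of $(N,\tau)$ with a Herman--Takesaki-type state $\psi$ on the hyperfinite III$_1$ factor $R_\infty$, chosen there so that $\Delta_\psi$ has \emph{no eigenvector orthogonal to the GNS vector}, and then identifies the centralizer through the word decomposition of the free-product GNS space. Your preliminary steps are correct: triviality of $(M_2)_{\varphi_2}$ does rule out eigenvalues $\lambda\neq 1$ of $\Delta_{\varphi_2}$ via \cite[Lemma 1.12]{Takesaki:ActaMath73}; the identification $\mathcal{H}_{\varphi_2}^{\Delta_{\varphi_2}}=\overline{\Lambda_{\varphi_2}((M_2)_{\varphi_2})}$ is correct folklore (most quickly seen from the mean ergodic theorem: $\frac{1}{2T}\int_{-T}^T\Delta_{\varphi_2}^{it}\,dt$ converges strongly to the projection onto the fixed space, while the operator averages $\frac{1}{2T}\int_{-T}^T\sigma_t^{\varphi_2}(x)\,dt$ have $\sigma$-weak cluster points lying in $(M_2)_{\varphi_2}$); and the type III$_1$ reduction via the T-set formula is fine. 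Together these give that $\Delta_{\varphi_2}$ restricted to $\mathring{\mathcal{H}}_{\varphi_2}:=\mathcal{H}_{\varphi_2}\ominus\mathbb{C}\Lambda_{\varphi_2}(1)$ has \emph{empty point spectrum}, which is exactly the property the paper demands of its building block. (One small misattribution: the multi-matrix summand in \cite[Theorem 4.1]{Ueda:Preprint10} is indexed by minimal projections of $M_1$ and of $M_2$ themselves, not of their centralizers; it vanishes here because the type III factor $M_2$ is diffuse.)

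The genuine gap is the sentence claiming that, for a $\Delta_\varphi^{it}$-fixed vector, ``fixedness \dots forces that letter into the $1$-eigenspace of $\Delta_{\varphi_2}$ inside $\mathring{\mathcal{H}}_{\varphi_2}$.'' That inference is valid only for words containing a \emph{single} $M_2$-letter. A word space such as $\mathring{\mathcal{H}}_{\varphi_2}\otimes\mathring{\mathcal{H}}_{\varphi_1}\otimes\mathring{\mathcal{H}}_{\varphi_2}$ carries the unitary group $\mathring{\Delta}_{\varphi_2}^{it}\otimes 1\otimes\mathring{\Delta}_{\varphi_2}^{it}$, and the fixed space of a tensor product of one-parameter unitary groups is \emph{not} the tensor product of the fixed spaces: eigenvalues can cancel. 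If $\mathring{\Delta}_{\varphi_2}$ had eigenvectors $\xi,\eta$ with eigenvalues $\lambda,\lambda^{-1}$, then $\xi\otimes\zeta\otimes\eta$ would be fixed although no letter lies in the (trivial) $1$-eigenspace; so triviality of the $1$-eigenspace alone cannot close the argument. What saves you is the stronger property you have in fact established --- empty point spectrum of $\mathring{\Delta}_{\varphi_2}$ --- combined with the lemma that $U_t\otimes V_t$ has no eigenvector whenever $U_t$ has none and $V_t$ is arbitrary: the spectral measure of the generator of $U_t\otimes V_t$ at $\xi\otimes\eta$ is the convolution $\mu_\xi\ast\mu_\eta$, and the convolution of a continuous measure with any measure is continuous, by Wiener's theorem \cite[Theorem VI.2.9]{Katznelson:Book}. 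This is precisely the ingredient the paper's proof inserts at this very point (the argument of \cite[Lemma 7]{Barnett:PAMS95} together with the Katznelson reference). With that lemma your argument closes; without it, the key step as written rests on a false principle.
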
  
\begin{proof} 
Let $R_\infty$ be the unique hyperfinite type III$_1$ factor. It is known (see \cite[\S3]{HermanTakesaki:CMP70} and also \cite[p.246--247]{NeshveyevStormer:EntropyBook}) that there is a faithful normal state $\psi$ on $R_\infty$ such that the modular operator $\Delta_\psi$ has no eigenvalue on the orthogonal complement of the representing vector $\xi_\psi$ of $\psi$ in $L^2(R_\infty,\psi)$. Let $(M,\varphi)$ be the free product of $(N,\tau)$ and $(R_\infty,\psi)$. By \cite[Theorem 3.4]{Ueda:Preprint10} the free product von Neumann algebra $M$ is a factor of type III$_1$. Moreover we can prove, see \cite[Lemma 7]{Barnett:PAMS95}, that the centralizer $M_\varphi$ is exactly $N$ by using the simple fact that any tensor product $U_t\otimes V_t$ of $1$-parameter unitary group $U_t$ without eigenvector and arbitrary one $V_t$ (even possibly to be the trivial one) has no eigenvector (which can easily be seen by using e.g.~\cite[Theorem VI.2.9 in p.138]{Katznelson:Book}).   
\end{proof} 

It is easy to see that the modular operator associated with the free product state $\varphi$ constructed in Proposition \ref{P-2.4} has no eigenvalue except $1$. However an almost periodic state may still exist on $M$, but it is likely that $M$ always has no such state. Hence it is desirable to find a necessary and sufficient condition for the existence of almost periodic states on the diffuse factor part $M_c$ of arbitrary free product von Neumann algebra $M$. This question will be answered in the next section.   

\section{$\tau$-invariant $\tau(M_c)$} 

Throughout this section let us assume that $M_1$ and $M_2$ are von Neumann algebras {\it with separable preduals}, since automorphism analysis will play a key r\^{o}le in this section, and also $\varphi_1$ and $\varphi_2$ are arbitrary faithful normal states on $M_1$ and $M_2$, respectively. Denote by $(M,\varphi)$ the free product of $(M_1,\varphi_1)$ and $(M_2,\varphi_2)$. The main purpose of this section is to compute the $\tau$-invariant $\tau(M_c)$ of the diffuse factor part $M_c$ and to clarify when the diffuse factor part $M_c$ possesses an almost periodic state or weight.  

Let us begin by recalling some definitions. For a given factor $N$ with separable predual, $N$ is said to be full if $\mathrm{Int}(N)$ is closed in $\mathrm{Aut}(N)$ endowed with the so-called $u$-topology, see \cite[\S III]{Connes:JFA74} (or \cite[Ch.XIV,\S3]{Takesaki:Book3}), and the $\tau$-invariant $\tau(N)$ of a full factor $N$ is defined to be the weakest topology on $\mathbb{R}$ that makes the so-called modular homomorphism $t \in \mathbb{R} \mapsto \delta_N(t)\in \mathrm{Out}(N)$ be continuous, see \cite[\S V]{Connes:JFA74}. Here $\mathrm{Out}(N) := \mathrm{Aut}(N)/\mathrm{Int}(N)$ (with the quotient map $\varepsilon_N$) becomes a Polish (= separable metrizable complete) group with the quotient topology induced from the $u$-topology and define $\delta_N(t) := \varepsilon_N(\sigma_t^\psi) \in \mathrm{Out}(N)$ with an arbitrary fixed faithful normal state or semifinite weight $\psi$ on $N$.   

The next proposition is most technically involved in the present notes.  

\begin{proposition}\label{P-3.1} If either $M_1$ or $M_2$ is diffuse, then for any sequence $(t_m)_m$ of real numbers, $\delta_M(t_m) \longrightarrow \varepsilon_M(\mathrm{Id})$ in $\mathrm{Out}(M)$ as $m \rightarrow \infty$ if and only if both $\sigma_{t_m}^{\varphi_i} \longrightarrow \mathrm{Id}$ in $\mathrm{Aut}(M_i)$, $i=1,2$, or equivalently $\sigma_{t_m}^\varphi \longrightarrow \mathrm{Id}$ in $\mathrm{Aut}(M)$, as $m \rightarrow \infty$. 
\end{proposition}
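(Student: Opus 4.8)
The plan is to prove the three conditions equivalent through the chain (c) $\Rightarrow$ (a) (immediate), (b) $\Leftrightarrow$ (c) (formal), and the substantial implication (a) $\Rightarrow$ (c). Throughout I would work in the GNS representation $(M \curvearrowright \mathcal{H}_\varphi, \Omega_\varphi)$ and use the standard-form correspondence between the $u$-topology on $\mathrm{Aut}$ and the strong operator topology on canonical implementations (see \cite[Ch.\,XIV]{Takesaki:Book3}, \cite[\S III]{Connes:JFA74}): a sequence $\beta_m \to \mathrm{Id}$ in the $u$-topology precisely when its canonical implementations satisfy $U_{\beta_m} \to 1$ strongly, where $U_{\sigma_t^\varphi} = \Delta_\varphi^{it}$, $U_{\mathrm{Ad}(u)} = u J_\varphi u J_\varphi$, and $\beta \mapsto U_\beta$ is a group homomorphism.

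For (b) $\Leftrightarrow$ (c): since $\sigma_t^\varphi = \sigma_t^{\varphi_1}\star\sigma_t^{\varphi_2}$, on the free-product decomposition $\mathcal{H}_\varphi = \mathbb{C}\Omega_\varphi \oplus \bigoplus_n \bigoplus_{i_1 \neq \cdots \neq i_n} \mathring{\mathcal{H}}_{i_1}\otimes\cdots\otimes\mathring{\mathcal{H}}_{i_n}$ (see \cite[\S1]{Dykema:Crelle94}, \cite[Proposition 4.2]{Dykema:FieldsInstituteComm97}) one has $\Delta_\varphi^{it} = 1 \oplus \bigoplus \Delta_{\varphi_{i_1}}^{it}\otimes\cdots\otimes\Delta_{\varphi_{i_n}}^{it}$. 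Read through the standard implementations on $\mathcal{H}_{\varphi_i}$, condition (b) is exactly $\Delta_{\varphi_i}^{it_m} \to 1$ strongly for $i = 1,2$; since every operator in sight is a contraction, strong convergence on each elementary tensor factor propagates to the tensor products and, by a uniform-bound argument, to the whole direct sum, giving $\Delta_\varphi^{it_m} \to 1$ strongly, i.e.\ (c). Conversely (c) restricts to the invariant subspaces $\mathring{\mathcal{H}}_i$, on which $\Delta_\varphi^{it}$ acts as $\Delta_{\varphi_i}^{it}$, recovering (b). Finally (c) $\Rightarrow$ (a) follows from continuity of the quotient map $\varepsilon_M$.

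The heart of the matter is (a) $\Rightarrow$ (c). First I would lift: as $\mathrm{Aut}(M)$ is Polish and the quotient map onto $\mathrm{Out}(M)$ is open, the convergence $\delta_M(t_m) \to \varepsilon_M(\mathrm{Id})$ yields unitaries $u_m \in M$ with $\sigma_{t_m}^\varphi\circ\mathrm{Ad}(u_m) \to \mathrm{Id}$ in the $u$-topology. Passing to canonical implementations this reads $\Delta_\varphi^{it_m}\,u_m J_\varphi u_m J_\varphi \to 1$ strongly, equivalently
\begin{equation*}
u_m J_\varphi u_m J_\varphi - \Delta_\varphi^{-it_m} \longrightarrow 0 \quad \text{strongly on } \mathcal{H}_\varphi .
\end{equation*}
Thus the inner implementation $u_m J_\varphi u_m J_\varphi \in M \vee J_\varphi M J_\varphi$ is asymptotically equal to the modular unitary $\Delta_\varphi^{-it_m}$, and it remains to deduce $\Delta_\varphi^{-it_m} \to 1$ strongly.

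This last deduction is the main obstacle, and it is exactly here that the diffuseness hypothesis is indispensable: one must show that such an asymptotic factorization of $\Delta_\varphi^{-it_m}$ through $M \cdot J_\varphi M J_\varphi$ is possible only when $\Delta_\varphi^{-it_m} \to 1$, i.e.\ that the modular flow of a free product with a diffuse component is never \emph{nontrivially} asymptotically inner. Assuming (say) $M_1$ diffuse, I would fix a diffuse abelian $A \subseteq M_1$ and a sequence of unitaries $w_k \in A$ that is $\sigma$-weakly null in $M$; freeness of $M_1$ and $M_2$ turns $\{w_k\}$ into a mixing device. The strategy is to test the displayed relation against reduced-word vectors (e.g.\ $\Lambda_\varphi(a) \in \mathring{\mathcal{H}}_i$) and to average by the $w_k$: since $J_\varphi u_m J_\varphi \in M'$ decouples from the left $M$-action and asymptotic freeness forces the cross terms produced by the $w_k$ to die as $k \to \infty$, the contribution of $u_m$ should average out, pinning (in the limit) the matrix coefficients of $\Delta_{\varphi_i}^{-it_m}$ on $\mathring{\mathcal{H}}_i$ to those of the identity. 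Run over a total set of vectors this gives weak operator convergence $\Delta_{\varphi_i}^{-it_m} \to 1$, which for these unitaries upgrades to strong convergence; using the same diffuse component $M_1$ as mixing device for both $i=1,2$ yields precisely (b), whence (c). The delicate and lengthy point—where essentially all the work resides—is the quantitative asymptotic-freeness estimate justifying that ``the $u_m$ average out'': one must control the reduced-word expansions of the vectors $w_k^* u_m^*(\,\cdots\,)u_m w_k$ uniformly in $m$ as $k \to \infty$, and it is exactly this averaging that is unavailable without a diffuse free component.
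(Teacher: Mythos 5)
Your reductions are fine as far as they go: (c) $\Rightarrow$ (a) is immediate, the lifting of $\delta_M(t_m)\rightarrow\varepsilon_M(\mathrm{Id})$ to unitaries $u_m$ with $\sigma_{t_m}^\varphi\circ\mathrm{Ad}(u_m)\rightarrow\mathrm{Id}$ in the $u$-topology is exactly how the paper's proof begins, and the equivalence (b) $\Leftrightarrow$ (c) via the free-product decomposition of $\Delta_\varphi$ is unproblematic. But the implication (a) $\Rightarrow$ (c), which is the entire content of the proposition, is not actually proved in your text: you describe a \emph{strategy} of averaging against a $\sigma$-weakly null sequence of unitaries $w_k$ in a diffuse abelian subalgebra $A\subseteq M_1$, and you yourself flag the ``quantitative asymptotic-freeness estimate'' as the point ``where essentially all the work resides.'' That estimate \emph{is} the theorem; leaving it as a plan means the proof is missing.

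Moreover, the plan as sketched hits a concrete obstruction that the paper's proof is specifically built to circumvent. Your averaging device requires comparing expressions like $w_k^*\bigl(u_mJ_\varphi u_mJ_\varphi\bigr)w_k$ with $w_k^*\Delta_\varphi^{-it_m}w_k$, and the latter produces $\sigma_{-t_m}^\varphi(w_k)$ (equivalently $\Delta_\varphi^{-it_m}w_k\Delta_\varphi^{it_m}$), which you cannot control: a diffuse abelian subalgebra of $M_1$ has no reason to be invariant under, let alone fixed by, the modular flow, and $M_1$ diffuse does \emph{not} imply that the centralizer $(M_1)_{\varphi_1}$ is diffuse or even nontrivial (Proposition \ref{P-2.4} of the paper exhibits trivial centralizers). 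The paper's proof deals with exactly this: it chooses an auxiliary faithful normal state $\psi$ on $M_1$ whose centralizer $(M_1)_\psi$ \emph{is} diffuse, picks Haar-type unitaries $a,b\in(M_1)_\psi$ (genuinely fixed by $\sigma^{\psi\circ E_1}$), and transfers the hypothesis from $\sigma^\varphi$ to $\sigma^{\psi\circ E_1}$ by absorbing the Connes cocycles $v(m)=[D\varphi_1:D\psi]_{t_m}$ into the lifted unitaries, $w(m)=u(m)v(m)$. It then works in an ultraproduct $M^\omega$: the limit unitary $w$ commutes with $(M_1)_\psi$; a reduced-word orthogonality estimate (the analogue of \cite[Proposition 3.5]{Ueda:Preprint10}) shows $w\in M_1^\omega$, hence $u=wv^*\in M_1^\omega$; freeness of $M_1^\omega$ and $M_2^\omega$ in $(M^\omega,\varphi^\omega)$ together with $uy_{(+)}^\varphi u^*=y$ for an invertible $y\in M_2^\circ$ forces $u$ to be asymptotically scalar; and Connes' argument \cite[Theorem 5.2]{Connes:JFA74} converts this into $\sigma_{t_m}^\varphi\rightarrow\mathrm{Id}$. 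Without the auxiliary state and the cocycle transfer, your mixing device does not mesh with the modular flow; without the orthogonality estimate and the ultraproduct freeness step, the claim that ``the $u_m$ average out'' remains an unproved assertion.
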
 
\begin{proof} We will borrow several facts and arguments from \cite[\S\S2.2 and \S3]{Ueda:Preprint10} in what follows.  

It suffices to show the `only if' part. Take a sequence $(t_m)_m$ of real numbers such that $\delta_M(t_m) \longrightarrow \varepsilon_M(\mathrm{Id})$ in $\mathrm{Out}(M)$ as $m \rightarrow \infty$. Then there is a sequence $(u(m))_m$ in $M^u$ such that $\mathrm{Ad}u(m)\circ\sigma_{t_m}^\varphi \longrightarrow \mathrm{Id}$ in $\mathrm{Aut}(M)$ as $m\rightarrow\infty$. Let us choose and fix an arbitrary free ultrafilter $\omega \in \beta(\mathbb{N})\setminus\mathbb{N}$. 

By symmetry we may and do assume that $M_1$ is diffuse. As in \cite[Theorem 3.4]{Ueda:Preprint10} there is a faithful normal state $\psi$ on $M_1$ so that $(M_1)_\psi$ is diffuse, and thus one can choose two unitaries $a,b \in (M_1)_\psi$ in such a way that $\varphi_1(a^n) = \delta_{n0} = \psi(b^n)$, see e.g.~the proof of \cite[Theorem 3.7]{Ueda:Preprint10}. Denote by $E_1 : M \rightarrow M_1$ the $\varphi$-preserving conditional expectation, see \cite[Lemma 2.1]{Ueda:Preprint10}. 

Since $\mathrm{Ad}u(m)\circ\sigma_{t_m}^\varphi \longrightarrow \mathrm{Id}$ in $\mathrm{Aut}(M)$ as $m\rightarrow\infty$, one has $\Vert \varphi - \varphi\circ\mathrm{Ad}u(m)\Vert_{M_*} = \Vert \varphi\circ\mathrm{Ad}u(m)^* - \varphi\Vert_{M_*} = \Vert \varphi\circ\sigma_{-t_m}^\varphi\circ\mathrm{Ad}u(m)^* - \varphi\Vert_{M_*} = \Vert \varphi - \varphi\circ\mathrm{Ad}u(m)\circ\sigma_{t_m}^\varphi\Vert_{M_*} \longrightarrow 0$ as $m \rightarrow \infty$. Hence, for any bounded sequence $(x(m))_m$ of $M$ with $x(m) \longrightarrow 0$ in $\sigma$-strong* topology as $m \rightarrow \omega$ we have 
\begin{align*} 
\Vert x(m)^* u(m)^*\Vert^2_\varphi 
&= 
\varphi(u(m) x(m) x(m)^* u(m)^*) \\
&\leq 
|(\varphi\circ\mathrm{Ad}u(m) - \varphi)(x(m) x(m)^*)| + \varphi(x(m) x(m)^*) \\
&\leq 
\sup_m\Vert x(m)\Vert_\infty^2 \Vert\varphi\circ\mathrm{Ad}u(m) - \varphi\Vert_{M_*} + \Vert x(m)^*\Vert_\varphi^2 \underset{m\rightarrow\omega}{\longrightarrow} 0, \\
\Vert x(m) u(m) \Vert_\varphi^2 &= \psi(u(m)^* x(m)^* x(m) u(m)) \\
&\leq |(\varphi\circ\mathrm{Ad}u(m)^* - \varphi)(x(m)^* x(m))| + \varphi(x(m)^* x(m)) \\
&\leq 
\sup_m\Vert x(m)\Vert_\infty^2 \Vert\varphi\circ\mathrm{Ad}u(m)^* - \varphi\Vert_{M_*} + \Vert x(m)\Vert_\varphi^2 \underset{m\rightarrow\omega}{\longrightarrow} 0 
\end{align*} 
so that $(u(m))_m$ represents a unitary $u$ in the ultraproduct $M^\omega$. Note that 
$$
\mathrm{Ad}(u(m) [D\varphi_1:D\psi]_{t_m})\circ\sigma_{t_m}^{\psi\circ E_1} = \mathrm{Ad}u(m)\circ\sigma_{t_m}^\varphi \longrightarrow \mathrm{Id}
$$ 
in $\mathrm{Aut}(M)$ as $m\rightarrow\infty$. Write $v(m) := [D\varphi_1:D\psi]_{t_m} \in M_1^u$ for simplicity, and set $w(m) := u(m) v(m)$. We apply the same argument as above to the pair $w(m)$ and $\psi\circ E_1$, and consequently we see that $(w(m))_m$ represents a unitary $w \in M^\omega$. We have $v := u^* w = [(u(m)^* w(m))_m] = [(v(m))_m]$ so that $(v(m))_m$ also represents a unitary $v \in M_1^\omega$. Let $y \in M^\omega$ be an arbitrary element with representative $(y(m))_m$, and $\chi$ be an arbitrary faithful normal state on $M$. For any bounded sequence $(x(m))_m$ of $M$ with $x(m) \longrightarrow 0$ in $\sigma$-strong* topology as $m \rightarrow \omega$ one has $\Vert\sigma_{\mp t_m}^\chi(x(m))\Vert_\chi = \Vert x(m)\Vert_\chi \longrightarrow 0$ and $\Vert\sigma_{\mp t_m}^\chi(x(m)^*)\Vert_\chi = \Vert x(m)^*\Vert_\chi \longrightarrow 0$ as $m\rightarrow\omega$ so that $\sigma_{\mp t_m}^\chi(x(m)) \longrightarrow 0$ in $\sigma$-strong* topology as $m \rightarrow \omega$. Therefore we get $\Vert x(m)\sigma_{\pm t_m}^\chi(y(m))\Vert_\chi = 
\Vert \sigma_{\mp t_m}^\psi(x(m)) y(m) \Vert_\chi \longrightarrow 0$ and $\Vert x(m)^*\sigma_{\pm t_m}^\chi(y(m)^*)\Vert_\chi = 
\Vert \sigma_{\mp t_m}^\psi(x(m)^*) y(m)^* \Vert_\chi \longrightarrow 0$ as $m\rightarrow\omega$. These establish that $(\sigma^\chi_{\pm t_m}(y(m)))_m$ represents an element in $M^\omega$, which we denote by $y_{(\pm)}^\chi$ in what follows. (This indeed shows that $(\sigma_{\pm t_m}^\chi)_m$ is `semiliftable' in the sense of Ocneanu in \cite[\S5.2]{Ocneanu:LNM1138}.) 

Let us prove that $w = E_1^\omega(w) \in M_1^\omega$. For any $x \in (M_1)_\psi$ one has 
$$
wxw^* = [(w(m) x w(m)^*)_m] = [(w(m) \sigma_{t_m}^{\psi\circ E_1}(x) w(m)^*)_n] = [(x)_m] = x
$$ 
inside $M^\omega$, since $\mathrm{Ad}w(m)\circ\sigma_{t_m}^{\psi\circ E_1} \longrightarrow \mathrm{Id}$ in $\mathrm{Aut}(M)$ as $m\rightarrow\infty$ (and thus the same holds true when $m \rightarrow \omega$). Hence we get $w \in ((M_1)_\psi)' \cap M^\omega \subseteq \{a,b\}'\cap M^\omega$. One can choose an invertible $y\in M_2^\circ$ ({\it n.b.}~$M_2 \neq \mathbb{C}$), see the proof of \cite[Theorem 3.7]{Ueda:Preprint10}. As seen in the previous paragraph  the sequence $(\sigma_{t_m}^{\psi\circ E_1}(y)))_m$ gives $y^{\psi\circ E_1}_{(+)} \in M^\omega$. Then we compute $wy_{(+)}^{\psi\circ E_1}w^* = [(w(m) \sigma_{t_m}^{\psi\circ E_1}(y)w(m)^*)_m] = [(y)_m] = y$ inside $M^\omega$ thanks to $\mathrm{Ad}w(m)\circ\sigma_{t_m}^{\psi\circ E_1} \longrightarrow \mathrm{Id}$ in $\mathrm{Aut}(M)$ as $m\rightarrow\infty$ again. Hence $y(w-E_1^\omega(w)) + (yE_1^\omega(w)-E_1^\omega(w)y_{(+)}^{\psi\circ E_1}) + (E_1^\omega(w)-w)y_{(+)}^{\psi\circ E_1} =yw - wy_{(+)}^{\psi\circ E_1} = 0$ inside $M^\omega$. For the purpose here we will prove, by the same technique as in \cite[Proposition 3.5]{Ueda:Preprint10}, that $y(w-E_1^\omega(w))$ is orthogonal to the others in $L^2(M^\omega,(\psi\circ E_1)^\omega)$ in what follows. As in \cite[Proposition 3.5]{Ueda:Preprint10} we write $M_1^\triangledown := \mathrm{Ker}(\psi)$, and denote by $P_1, P_2, P_3, P_4$ the projections from $\mathcal{H} := L^2(M,\psi\circ E_1)$ onto the closed subspaces spanned (via $\Lambda_{\psi\circ E_1}$) by the following sets of words 
$$
M_1^\circ M_2^\circ \cdots M_1^\triangledown, \quad 
M_1^\circ \cdots M_2^\circ, \quad 
M_2^\circ \cdots M_1^\triangledown, \quad 
M_2^\circ \cdots M_2^\circ,  
$$
respectively. We also denote by $\bar{E}_1$ the projection from $\mathcal{H}$ onto the closure of $M_1$ (as a subspace of $\mathcal{H}$ via $\Lambda_{\psi\circ E_1}$) induced by $E_1$, see the proof of \cite[Lemma 2.1]{Ueda:Preprint10}. Remark (see the proof of \cite[Proposition 3.5]{Ueda:Preprint10}) that $I_\mathcal{H} = \bar{E}_1 + P_1 + P_2 + P_3 + P_4$ (and $\bar{E}_1, P_1, P_2, P_3, P_4$ are mutually orthogonal). With replacing $u,v$ there by the above unitaries $a,b$ the exactly same argument as in the proof of \cite[Proposition 3.5]{Ueda:Preprint10} shows that   
for each $\delta>0$ there is a neighborhood $W_\delta$ in $\beta(\mathbb{N})$ at $\omega$ such that 
\begin{equation}\label{Eq-3.1} 
\Vert(P_2+P_3+P_4)\Lambda_{\psi\circ E_1}(w(m))\Vert_{\psi\circ E_1} < \delta
\end{equation} 
as long as $m \in W_\delta\cap\mathbb{N}$. We then regard $L^2(M^\omega,(\psi\circ E_1)^\omega)$ as a closed subspace of the ultraproduct $\mathcal{H}^\omega$. One can see, by using \eqref{Eq-3.1}, that 
$$
\Lambda_{(\psi\circ E_1)^\omega}(y(w-E_1^\omega(w))) = \big[\big(yP_1\Lambda_{\psi\circ E_1}(w(m))\big)_m\big]
$$
in $\mathcal{H}^\omega$ with $\mathcal{H} = L^2(M,\psi\circ E_1)$. See the estimate \eqref{Eq-3.2} below or the proof of \cite[Proposition 3.5]{Ueda:Preprint10} for its detailed derivation. Also it is trivial that 
$$
\Lambda_{(\psi\circ E_1)^\omega}(yE_1^\omega(w)-E_1^\omega(w)y_{(+)}^{\psi\circ E_1}) = 
\big[\big(\Lambda_{\psi\circ E_1}(y E_1(w(m)) - E_1(w(m))\sigma_{t_m}^{\psi\circ E_1}(y))\big)_m\big]
$$
in $\mathcal{H}^\omega$. Consider a smoothing element $y_n$ ($n\in\mathbb{N}$) of $y$ with respect to $\sigma^{\psi\circ E_1}$ defined to be  
$$
y_n := \frac{1}{\sqrt{n\pi}}\int_{-\infty}^\infty e^{-t^2/n}\sigma_t^{\psi\circ E_1}(y)\,dt,  
$$ 
which falls into the $\sigma$-strong closure of the linear span of $M_1 M_2^\circ M_1$ and converges to $y$ in $\sigma$-weak topology as $n \rightarrow \infty$ (see e.g.~the proof of \cite[Proposition 3.5]{Ueda:Preprint10}). Note that $\sigma_{-i/2}^{\psi\circ E_1}(\sigma_{t_m}^{\psi\circ E_1}(y_n)) = \sigma_{t_m}^{\psi\circ E_1}(\sigma_{-i/2}^{\psi\circ E_1}(y_n))$, and thus for each $n$ we have, by \eqref{Eq-3.1},  
\begin{align}\label{Eq-3.2} 
&\big\Vert\Lambda_{(\psi\circ E_1)^\omega}((w-E_1^\omega(w))y_{n\,(+)}^{\psi\circ E_1}) \notag\\
&\phantom{aaaaaaaaaaaaa}-
\big[\big(J\sigma_{t_m}^{\psi\circ E_1}(\sigma_{-i/2}^{\psi\circ E_1}(y_n))^* J P_1\Lambda_{\psi\circ E_1}(w(m))\big)_m\big]\big\Vert_{(\psi\circ E_1)^\omega} \notag\\
&= \lim_{m\rightarrow\omega} 
\big\Vert J\sigma_{t_m}^{\psi\circ E_1}(\sigma_{-i/2}^{\psi\circ E_1}(y_n))^* J (\Lambda_{\psi\circ E_1}(w(m)-E_1(w(m))) \notag\\
&\phantom{aaaaaaaaaaaaaaaaaaaaaaaaaaaaaaaaaaaaaaaaa}- P_1\Lambda_{\psi\circ E_1}(w(m)))\big\Vert_{\psi\circ E_1} \\
&\leq \Vert \sigma_{-i/2}^{\psi\circ E_1}(y_n)\Vert_\infty \sup_{m \in W_\delta\cap\mathbb{N}}\Vert(P_2+P_3+P_4)\Lambda_{\psi\circ E_1}(w(m))\Vert_{\psi\circ E_1} \notag\\ 
&< \Vert \sigma_{-i/2}^{\psi\circ E_1}(y_n)\Vert_\infty \delta, \notag
\end{align}   
where $J$ is the modular conjugation of $M \curvearrowright \mathcal{H} = L^2(M,\psi\circ E_1)$. Since $\delta>0$ is arbitrary, we get 
$$
\Lambda_{(\psi\circ E_1)^\omega}((w-E_1^\omega(w))y_{n\,(+)}^{\psi\circ E_1}) = 
\big[\big(J\sigma_{-i/2}^{\psi\circ E_1}(\sigma_{t_m}^{\psi\circ E_1}(y_n))^*J P_1\Lambda_{\psi\circ E_1}(w(m))\big)_m\big]
$$
in $\mathcal{H}^\omega$ for each $n$. Since $\sigma_{t_m}^{\psi\circ E_1}(y_n) = v(m)^* \sigma_{t_m}^\varphi(y_n) v(m)$ still falls in the $\sigma$-strong closure of $M_1 M_2^\circ M_1$ ({\it n.b.}~$v(m) = [D\varphi_1:D\psi]_{t_m} \in M_1$), the same argument as in the proof of \cite[Proposition 3.5]{Ueda:Preprint10} shows that $\Lambda_{(\psi\circ E_1)^\omega}(y(w-E_1^\omega(w)))$ is orthogonal to $\Lambda_{(\psi\circ E_1)^\omega}(yE_1^\omega(w)-E_1^\omega(w)y_{(+)}^{\psi\circ E_1})$ and  also to all $\Lambda_{(\psi\circ E_1)^\omega}((w-E_1^\omega(w))y_{n\,(+)}^{\psi\circ E_1})$'s. Notice here that $(\sigma_{-t_m}^{\psi\circ E_1}(w(m)))_m,\, (\sigma_{-t_m}^{\psi\circ E_1}(y))_m$ represent $w_{(-)}^{\psi\circ E_1},\, y_{(-)}^{\psi\circ E_1} \in M^\omega$, respectively, as seen before. We have, for $z = y$ or $y_n$,  
\begin{align*} 
&(\psi\circ E_1)^\omega((w-E_1^\omega(w))^* y^* (w-E_1^\omega(w))z_{(+)}^{\psi\circ E_1}) \\
&= 
\lim_{m\rightarrow\omega} \psi\circ E_1((w(m)^* - E_1(w(m)^*))y^* (w(m)-E_1(w(m))) \sigma_{t_m}^{\psi\circ E_1}(z)) \\
&= 
\lim_{m\rightarrow\omega} \psi\circ E_1((\sigma_{-t_m}^{\psi\circ E_1}(w(m))^* - E_1(\sigma_{-t_m}^{\psi\circ E_1}(w(m))^*)) \\
&\phantom{aaaaaaaaaaaaaa}\ \sigma_{-t_m}^{\psi\circ E_1}(y^*)(\sigma_{-t_m}^{\psi\circ E_1}(w(m))-E_1(\sigma_{-t_m}^{\psi\circ E_1}(w(m))))z) \\
&= 
(\psi\circ E_1)^\omega((w_{(-)}^{\psi\circ E_1} - E_1^\omega(w_{(-)}^{\psi\circ E_1}))^* y_{(-)}^{\psi\circ E_1}{}^*(w_{(-)}^{\psi\circ E_1} - E_1^\omega(w_{(-)}^{\psi\circ E_1}))z), 
\end{align*}
and hence 
\begin{align*} 
&(\Lambda_{(\psi\circ E_1)^\omega}((w-E_1^\omega(w))y_{(+)}^{\psi\circ E_1}|\Lambda_{(\psi\circ E_1)^\omega}(y(w-E_1^\omega(w))))_{(\psi\circ E_1)^\omega} \\
&= (\psi\circ E_1)^\omega((w_{(-)}^{\psi\circ E_1} - E_1^\omega(w_{(-)}^{\psi\circ E_1}))^* y_{(-)}^{\psi\circ E_1}{}^*(w_{(-)}^{\psi\circ E_1} - E_1^\omega(w_{(-)}^{\psi\circ E_1}))y) \\
&= 
\lim_{n\rightarrow\infty}(\psi\circ E_1)^\omega((w_{(-)}^{\psi\circ E_1} - E_1^\omega(w_{(-)}^{\psi\circ E_1}))^* y_{(-)}^{\psi\circ E_1}{}^*(w_{(-)}^{\psi\circ E_1} - E_1^\omega(w_{(-)}^{\psi\circ E_1}))y_n) \\
&= 
\lim_{n\rightarrow\infty}(\Lambda_{(\psi\circ E_1)^\omega}((w-E_1^\omega(w))y_{n\,(+)}^{\psi\circ E_1}|\Lambda_{(\psi\circ E_1)^\omega}(y(w-E_1^\omega(w))))_{(\psi\circ E_1)^\omega} = 0. 
\end{align*} 
Consequently $y(w-E_1^\omega(w))$ is orthogonal to $(w-E_1^\omega(w))y_{(+)}^{\psi\circ E_1}$ too. Therefore we get $\Vert y(w-E_1^\omega(w))\Vert_{(\psi\circ E_1)^\omega} \leq \Vert yw - wy_{(+)}^{\psi\circ E_1}\Vert_{(\psi\circ E_1)^\omega} = 0$, implying $w = E_1^\omega(w) \in M_1^\omega$ since $y$ is invertible. 

Since $w \in M_1^\omega$ we have $u = wv^* \in M_1^\omega$. For the above $y \in M_2^\circ$ we have $u y_{(+)}^\varphi u^* = [(\mathrm{Ad}u(m)\circ\sigma_{t_m}^\varphi(y))_m] = [(y)_m] = y$, since $\mathrm{Ad}u(m)\circ\sigma_{t_m}^\varphi \longrightarrow \mathrm{Id}$ in $\mathrm{Aut}(M)$ $m \rightarrow \infty$. Thus $(u-\varphi^\omega(u)1)y_{(+)}^\varphi + \varphi^\omega(u)y_{(+)}^\varphi = uy_{(+)}^\varphi = yu = \varphi^\omega(u)y + y(u-\varphi^\omega(u)1)$. Since $M_1^\omega$ and $M_2^\omega$ are free in $(M^\omega,\varphi^\omega)$ (see e.g.~\cite[Proposition 4]{Ueda:TAMS03}), we have $\Vert y\Vert_{\varphi^\omega}\Vert u-\varphi^\omega(u)1\Vert_{\varphi^\omega} = \Vert y(u-\varphi^\omega(u)1)\Vert_{\varphi^\omega} = 0$ so that $\lim_{m\rightarrow\omega}\Vert u(m) - \varphi(u(m))1\Vert_\varphi = \Vert u - \varphi^\omega(u)1\Vert_{\varphi^\omega} = 0$ thanks to $y \neq 0$. Since our choice of $\omega \in \beta(\mathbb{N})\setminus\mathbb{N}$ is arbitrary, we get $\lim_{m\rightarrow\infty}\Vert u(m) - \varphi(u(m))1\Vert_\varphi = 0$. Hence we conclude that $\sigma_{t_m}^\varphi \longrightarrow \mathrm{Id}$ in $\mathrm{Aut}(M)$ as $m \rightarrow \infty$ in the exactly same way as in the proof of \cite[Theorem 5.2]{Connes:JFA74}. 
\end{proof} 

Here is the main theorem of the present notes. 

\begin{theorem}\label{T-3.2} Assume that $M_1 \neq \mathbb{C} \neq M_2$ and $\mathrm{dim}(M_1)+\mathrm{dim}(M_2) \geq 5$. Then the $\tau$-invariant $\tau(M_c)$ of the diffuse factor part $M_c$ of the free product von Neumann algebra $M$ is the weakest topology on $\mathbb{R}$ that makes the both mappings $t \in \mathbb{R} \mapsto \sigma_t^{\varphi_i} \in \mathrm{Aut}(M_i)$, $i = 1,2$, be continuous. 
\end{theorem}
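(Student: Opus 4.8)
The plan is to identify both topologies through their convergent sequences. Under the standing hypotheses the results recalled in \S1 from \cite{Ueda:Preprint10} guarantee that $M_c$ is a full factor (of type II$_1$ or III$_\lambda$, $\lambda\neq0$), so $\tau(M_c)$ is well-defined as the weakest topology making $t\mapsto\delta_{M_c}(t)\in\mathrm{Out}(M_c)$ continuous. Write $\mathcal{T}$ for the target topology, the weakest one making both $t\mapsto\sigma^{\varphi_i}_t\in\mathrm{Aut}(M_i)$ continuous. Since $\mathrm{Out}(M_c)$ and the $\mathrm{Aut}(M_i)$ are Polish groups and the three maps in question are homomorphisms of $(\mathbb{R},+)$, both $\tau(M_c)$ and $\mathcal{T}$ are pseudometrizable group topologies on $\mathbb{R}$; in particular each is first countable, hence determined by its convergent sequences, and by translation invariance it is enough to compare convergence to $0$. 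Thus the theorem is equivalent to the claim that, for every real sequence $(s_m)_m$, one has $\delta_{M_c}(s_m)\to\varepsilon_{M_c}(\mathrm{Id})$ in $\mathrm{Out}(M_c)$ if and only if $\sigma^{\varphi_i}_{s_m}\to\mathrm{Id}$ in $\mathrm{Aut}(M_i)$ for $i=1,2$.

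I would then distinguish two cases. If one of $M_1,M_2$ is diffuse it has no minimal projection, so the explicit description of the atomic summand $M_d$ in \cite[Theorem 4.1]{Ueda:Preprint10} (built from pairs of minimal projections of the two free components) forces $M_d=0$, i.e.\ $M=M_c$. Hence $\delta_{M_c}=\delta_M$, and the equivalence displayed above is precisely Proposition \ref{P-3.1}. This case is therefore settled at once by the previous reduction together with \ref{P-3.1}.

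The remaining case is that both $M_1$ and $M_2$ are of type I with discrete center. Then each $\varphi_i$ is automatically almost periodic, whence so is $\varphi$, and Theorem \ref{T-2.2} applies: $M_c$ is full with factorial centralizer $(M_c)_{\varphi|_{M_c}}$, and by Corollary \ref{C-2.3} the point spectrum of $\Delta_{\varphi|_{M_c}}$ equals $\mathrm{Sd}(M_c)$, the multiplicative group generated by the point spectra of $\Delta_{\varphi_1}$ and $\Delta_{\varphi_2}$. Because $M_c$ then carries an almost periodic state with factorial centralizer, the same circle of ideas used by Connes \cite[\S4]{Connes:JFA74} (and already invoked in Corollary \ref{C-2.3}) identifies $\tau(M_c)$ with the weakest topology on $\mathbb{R}$ rendering $t\mapsto\lambda^{it}$ continuous for $\lambda$ in $\mathrm{Sd}(M_c)$: concretely, $\delta_{M_c}(s_m)\to\varepsilon_{M_c}(\mathrm{Id})$ forces the unperturbed flow $\sigma^{\varphi|_{M_c}}_{s_m}$ to converge to $\mathrm{Id}$, by factoriality of the centralizer. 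On the other hand, almost periodicity of $\varphi_i$ makes the eigenoperators of $\sigma^{\varphi_i}$ total, so $\sigma^{\varphi_i}_{s_m}\to\mathrm{Id}$ exactly when $\lambda^{is_m}\to1$ for every eigenvalue $\lambda$ of $\Delta_{\varphi_i}$; thus $\mathcal{T}$ is the weakest topology making $t\mapsto\lambda^{it}$ continuous for $\lambda$ in the group generated by the two point spectra, which by Corollary \ref{C-2.3} is again $\mathrm{Sd}(M_c)$. Hence $\tau(M_c)=\mathcal{T}$ in this case as well.

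The real obstacle is Proposition \ref{P-3.1}, whose ``only if'' direction --- pulling convergence of $\delta_M(s_m)$ in $\mathrm{Out}(M)$ back to convergence of the \emph{component} modular flows, through the free-independence and orthogonality estimates carried out in the ultraproduct $M^\omega$ --- carries essentially all the analytic weight; by contrast the reduction to sequences and the almost periodic case are formal consequences of \S2. A subtler preliminary point, easy to overlook, is that $\tau(M_c)$ is a genuine topology rather than a mere convergence class, so one must first secure the pseudometrizability of both topologies before it is legitimate to test equality on sequences alone.
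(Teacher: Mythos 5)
Your preliminary reduction (both topologies are initial topologies of homomorphisms into Polish groups, hence pseudometrizable group topologies, hence determined by sequences converging to $0$) is correct, and it is also what the paper implicitly does. The problem is that your two cases are not exhaustive, and the omitted case is exactly where the paper's proof has to do its real work. ``One of $M_1,M_2$ is diffuse'' and ``both $M_1,M_2$ are of type I with discrete center'' do not cover the mixed situation in which some $M_i=M_{id}\oplus M_{ic}$ has \emph{both} summands nonzero while neither $M_1$ nor $M_2$ is diffuse --- for instance $M_1=\mathbb{C}\oplus P$ with $P$ diffuse, $M_2=M_2(\mathbb{C})$. In that situation Proposition \ref{P-3.1} does not apply to $M=M_1\star M_2$: its hypothesis is that one free \emph{component} is diffuse, not that $M$ (or $M_c$) is diffuse, so your first argument is unavailable. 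And the almost-periodicity route is also unavailable, because $\varphi_1|_{M_{1c}}$ can be completely arbitrary (take $P$ a type III$_1$ factor with a non--almost-periodic state), so neither Theorem \ref{T-2.2} nor Corollary \ref{C-2.3} can be invoked. Thus, as written, your argument proves the theorem only under the extra hypothesis that each $M_i$ is either diffuse or of type I with discrete center.

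Filling this gap is the content of the paper's actual proof, and it is not a formal patch: one first reduces to the case where $M_c$ is of type III, then compresses by $p=1_{M_{1c}}$, using \cite[Theorem 4.1]{Ueda:Preprint10} to get $pMp\cong M_c$ and hence $\tau(M_c)=\tau(pMp)$; one then applies Dykema's free etymology \cite[Lemma 2.2]{Ueda:Preprint10} to rewrite $pMp$ as the free product of the \emph{diffuse} algebra $(M_{1c},(1/\varphi_1(p))\varphi_1|_{M_{1c}})$ and $(pNp,(1/\varphi_1(p))\varphi|_{pNp})$ with $N=(M_{1d}\oplus\mathbb{C}p)\vee M_2$, so that Proposition \ref{P-3.1} \emph{does} apply to this new free product; finally one must transfer the resulting convergences $\sigma_{t_m}^{\varphi_1|_{M_{1c}}}\to\mathrm{Id}$ and $\sigma_{t_m}^{\varphi|_{pNp}}\to\mathrm{Id}$ back to convergence of $\sigma^{\varphi_2}_{t_m}$ and $\sigma^{\varphi_1|_{M_{1d}}}_{t_m}$, which the paper does by eigenvector arguments: eigenvectors $x$ of the modular flows of the discrete parts satisfy $pxp\neq0$ (or $pyxy^*p\neq 0$), and the states on the discrete parts are automatically almost periodic. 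None of these steps (compression, free etymology, eigenvector transfer) appears in your proposal. For what it is worth, the two cases you do treat are sound and essentially parallel to the paper: your first case is Proposition \ref{P-3.1} verbatim (your observation that $M_d=0$ there is correct), and your second case corresponds to the paper's appeal to Theorem \ref{T-2.2}, with the paper using the ultraproduct ergodicity $((M_c)_{\varphi|_{M_c}})'\cap M_c^\omega=\mathbb{C}$ directly where you invoke factoriality of the centralizer and Connes' Sd-machinery.
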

\begin{proof} We may assume that $M_c$ is of type III, that is, either $\varphi_1$ or $\varphi_2$ is non-tracial. Let us decompose $M_i = M_{id}\oplus M_{ic}$ into the `type I with discrete center' part and the diffuse part, $i = 1,2$. We may and do further assume that either $M_{1c} \neq 0$ or $M_{2c} \neq 0$. (Otherwise $((M_c)_{\varphi|_{M_c}})' \cap M_c^\omega = \mathbb{C}$ by Theorem \ref{T-2.2} (or \cite[Remark 4.2 (4)]{Ueda:Preprint10}), which immediately implies the desired assertion, see the proof of Proposition \ref{P-3.1}. In fact, if $\mathrm{Ad}u(m)\circ\sigma_{t_m}^\varphi \longrightarrow \varepsilon_{M_c}(\mathrm{Id})$ in $\mathrm{Aut}(M_c)$, then $(u(m))_m$ represents a unitary in $((M_c)_{\varphi|_{M_c}})'\cap M_c^\omega=\mathbb{C}$, implying $\lim_{m\rightarrow\omega}\Vert u(m)-(1/\varphi(1_{M_c}))\varphi(u(m))1_{M_c}\Vert_\varphi = 0$.) In what follows we assume that $M_{1c} \neq 0$. By \cite[Theorem 4.1]{Ueda:Preprint10} the compressed algebra $1_{M_{1c}}M1_{M_{1c}}$ is isomorphic to $M_c$ since $M_c$ is assumed to be of type III. Thus $\tau(M_c) = \tau(1_{M_{1c}}M1_{M_{1c}})$ holds, and write $p:=1_{M_{1c}}$ for simplicity. By Dykema's free etymology technique (see e.g.~\cite[Lemma 2.2]{Ueda:Preprint10}) one has  
$$
(pMp,\,(1/\varphi_1(p))\varphi|_{pMp}) = 
(M_{1c},(1/\varphi_1(p))\varphi_1|_{M_{1c}})\star (pNp,(1/\varphi_1(p))\varphi|_{1_{pNp}}), 
$$ 
where $N = (M_{1d}\oplus\mathbb{C}p)\vee M_2$. Suppose that $t_m \rightarrow 0$ in $\tau(M_c) = \tau(pMp)$. By Proposition \ref{P-3.1} $\sigma_{t_m}^{\varphi_1|_{M_{1c}}} \longrightarrow \mathrm{Id}$ in $\mathrm{Aut}(M_{1c})$ and $\sigma_{t_m}^{\varphi|_{pNp}} \longrightarrow \mathrm{Id}$ in $\mathrm{Aut}(pNp)$. Note that $N$ is the free product von Neumann algebra of $M_{1d}\oplus\mathbb{C}p$ and $M_2$ with respect to $\varphi_1|_{M_{1d}\oplus\mathbb{C}p}$ and $\varphi_2$. Any eigenvector $x \in M_{2d}$ of the modular action $\sigma^{\varphi_2|_{M_{2d}}} = \sigma^{\varphi_2}|_{M_{2d}}$, i.e., $\sigma_t^{\varphi_2}(x) = \lambda^{it}x$ for some $\lambda>0$, satisfies $pxp \neq 0$. It follows that $\sigma_{t_m}^{\varphi_2|_{M_{2d}}} \longrightarrow \mathrm{Id}$ in $\mathrm{Aut}(M_{2d})$ since the restrictions $\varphi_2|_{M_{2d}}$ are almost periodic. When $M_{2c} = 0$, i.e., $M_2 = M_{2d} \neq \mathbb{C}$, one can choose an eigenvector $y \in M_{2d}^\circ$ of $\sigma^{\varphi_2|_{M_{2d}}}=\sigma^{\varphi_2}|_{M_{2d}}$, and then any non-trivial eigenvector $x \in M_{1d}$ of $\sigma^{\varphi_1|_{M_{1d}}}=\sigma^{\varphi_1}|_{M_{1d}}$ satisfies $pyxy^* p \neq 0$, and thus $\sigma_{t_m}^{\varphi_1|_{M_{1d}}} \longrightarrow \mathrm{Id}$ in $\mathrm{Aut}(M_{1d})$ as above. When $M_{2c} \neq 0$, we also have, by symmetry, $\sigma_{t_m}^{\varphi_2|_{M_{2c}}}\longrightarrow \mathrm{Id}$ in $\mathrm{Aut}(M_{2c})$ and $\sigma_{t_m}^{\varphi_1|_{M_{1d}}} \longrightarrow \mathrm{Id}$ in $\mathrm{Aut}(M_{1d})$. Consequently the desired assertion follows. \end{proof}  

\begin{corollary}\label{C-3.3} Assume that  $M_1 \neq \mathbb{C} \neq M_2$ and $\mathrm{dim}(M_1)+\mathrm{dim}(M_2) \geq 5$. Then a necessary and sufficient condition for the existence of almost periodic state or weight on the diffuse factor part $M_c$ of the free product von Neumann algebra $M$ is that both the given $\varphi_1$ and $\varphi_2$ are almost periodic. 
\end{corollary}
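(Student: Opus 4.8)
The plan is to obtain the sufficiency for free from Theorem \ref{T-2.2} and to extract the necessity from the computation of $\tau(M_c)$ in Theorem \ref{T-3.2}, combined with Connes' correspondence between almost periodicity of a state and the $\tau$-invariant of a full factor. Note first that the hypotheses $M_1\neq\mathbb{C}\neq M_2$ and $\mathrm{dim}(M_1)+\mathrm{dim}(M_2)\geq 5$ exclude the tracial $2\times 2$ exceptional case, so that $M_c$ is a full factor (of type II$_1$ or III$_\lambda$) with separable predual; hence $\tau(M_c)$ and the modular homomorphism $\delta_{M_c}$ are well defined. For the sufficiency there is essentially nothing to do: if both $\varphi_1$ and $\varphi_2$ are almost periodic, Theorem \ref{T-2.2} already asserts that $\varphi|_{M_c}$ is an almost periodic state on $M_c$.

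For the necessity, I would start from a given almost periodic faithful normal state or semifinite weight $\psi$ on $M_c$. Since $M_c$ has separable predual, the point spectrum of $\Delta_\psi$ is countable and generates a countable subgroup $\Gamma\subseteq\mathbb{R}_+^\times$; put $H:=\log\Gamma\subseteq\mathbb{R}$ and let $\sigma(\mathbb{R},H)$ denote the weakest topology on $\mathbb{R}$ making every character $t\mapsto\lambda^{it}$ ($\lambda\in\Gamma$) continuous. Using the spectral decomposition $\Delta_\psi=\sum_{\lambda}\lambda\,E_{\Delta_\psi}(\{\lambda\})$, a dominated-convergence argument shows that $t\mapsto\Delta_\psi^{it}$, and hence $t\mapsto\sigma_t^\psi$, is continuous from $(\mathbb{R},\sigma(\mathbb{R},H))$ into $\mathrm{Aut}(M_c)$. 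Consequently $\delta_{M_c}=\varepsilon_{M_c}\circ\sigma^\psi$ is $\sigma(\mathbb{R},H)$-continuous, so by the minimality in the definition of $\tau$ one gets $\tau(M_c)\subseteq\sigma(\mathbb{R},H)$. By Theorem \ref{T-3.2} each map $t\mapsto\sigma_t^{\varphi_i}\in\mathrm{Aut}(M_i)$ is $\tau(M_c)$-continuous, hence $\sigma(\mathbb{R},H)$-continuous as well, the latter topology being finer.

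It then remains to convert this continuity into the almost periodicity of each $\varphi_i$, and this is the heart of the matter. As a group homomorphism that is continuous at $0$, each $t\mapsto\sigma_t^{\varphi_i}$ is uniformly continuous on the topological group $(\mathbb{R},\sigma(\mathbb{R},H))$, which is precompact with completion the compact dual group $\widehat{H_d}$ of $H$ equipped with the discrete topology. A uniformly continuous map sends a totally bounded group to a totally bounded set, so the range $\{\sigma_t^{\varphi_i}:t\in\mathbb{R}\}$ is relatively compact in the Polish group $\mathrm{Aut}(M_i)$ (equivalently, the homomorphism extends to a continuous one $\widehat{H_d}\to\mathrm{Aut}(M_i)$ with compact image). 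Applying the canonical (standard-form) implementation $\mathrm{Aut}(M_i)\to U(L^2(M_i,\varphi_i))$, which is continuous and carries $\sigma_t^{\varphi_i}$ to $\Delta_{\varphi_i}^{it}$, I conclude that $\{\Delta_{\varphi_i}^{it}:t\in\mathbb{R}\}$ is relatively compact in the strong operator topology; by the standard spectral criterion this forces $\Delta_{\varphi_i}$ to have pure point spectrum, i.e.\ $\varphi_i$ to be almost periodic, for $i=1,2$. The stated equivalence with almost periodicity of $\varphi$ itself then follows from the description of $\Delta_\varphi$ recalled in the proof of Theorem \ref{T-2.2}.

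The step I expect to be the main obstacle is precisely this last passage: upgrading mere $\sigma(\mathbb{R},H)$-continuity of the one-parameter groups $\sigma^{\varphi_i}$ to genuine relative compactness of their ranges, and thence to pure point spectrum of $\Delta_{\varphi_i}$. The crux is to exploit that $\sigma(\mathbb{R},H)$ is a \emph{precompact} group topology and that the canonical implementation is a homeomorphism onto its image, so that relative compactness transfers faithfully between $\mathrm{Aut}(M_i)$ and $U(L^2(M_i,\varphi_i))$. Some care is also needed to treat faithful normal weights on $M_c$ on the same footing as states, and to record that $M_c$ is full so that the whole $\tau$-invariant machinery applies; both points are supplied by the structural results of \cite{Ueda:Preprint10} quoted in \S1.
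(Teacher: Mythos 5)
Your proposal is correct and follows essentially the same route as the paper: sufficiency is immediate from Theorem \ref{T-2.2}, and necessity comes from combining Theorem \ref{T-3.2} with Connes' correspondence between almost periodicity of a state/weight and continuity of its modular flow in the precompact group topology determined by the point spectrum of its modular operator. The only (minor) difference is that the paper first reduces to the type III$_1$ case and invokes \cite[Theorem 4.7 and Proposition 1.1]{Connes:JFA74} as black boxes, extending the flows to $\widehat{\mathrm{Sd}(M_c)}$, whereas you work directly with the given almost periodic weight and re-derive the two needed implications (dominated convergence on the spectral decomposition, and total boundedness pushed through the standard implementation) by hand.
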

\begin{proof} We may and do assume that $M_c$ is of type III$_1$. Suppose that $M_c$ has an almost periodic state. By \cite[Theorem 4.7]{Connes:JFA74} there is an almost periodic weight $\psi$ on $M_c$ such that the point spectrum of $\Delta_\psi$ is exactly $\mathrm{Sd} := \mathrm{Sd}(M_c)$. Then by \cite[Proposition 1.1]{Connes:JFA74} $t \in \mathbb{R} \mapsto \sigma_t^\psi$ can continuously be extended to the dual group $\widehat{\mathrm{Sd}}$ ($\mathrm{Sd}$ is equipped with its discrete topology), where $\mathbb{R}$ is continuously, faithfully (\cite[Corollary 4.11]{Connes:JFA74}) embedded into $\widehat{\mathrm{Sd}}$ whose range is dense. Note that $\sigma_{t_m}^\psi \rightarrow \mathrm{Id}$ in $\mathrm{Aut}(M_c)$ implies $\delta_{M_c}(t_m) \longrightarrow \varepsilon_{M_c}(\mathrm{Id})$ in $\mathrm{Out}(M_c)$, and hence by Theorem \ref{T-3.2} $\sigma_{t_m}^{\varphi_i} \rightarrow \mathrm{Id}$ in $\mathrm{Aut}(M_i)$, $i=1,2$. Thus $t \in \mathbb{R} \mapsto \sigma_t^{\varphi_i} \in \mathrm{Aut}(M_i)$ can continuously be extended to the whole $\widehat{\mathrm{Sd}}$, $i=1,2$.  
Hence both $\varphi_i$, $i=1,2$, must be almost periodic by \cite[Proposition 1.1]{Connes:JFA74}.  
\end{proof} 

\section{Concluding Remarks} 

The previous paper \cite{Ueda:Preprint10} and the present notes solve the questions of 
\begin{itemize} 
\item its factoriality (\cite[Theorem 4.1]{Ueda:Preprint10}), 
\item determining its Murray--von Neumann--Connes type (\cite[Theorem 4.1]{Ueda:Preprint10}), 
\item its fullness (\cite[Theorem 4.1]{Ueda:Preprint10}),
\item computing its $\mathrm{Sd}$-invariant (Corollary \ref{C-2.3} of the present notes), 
\item computing its $\tau$-invariant (Theorem \ref{T-3.2} of the present notes) 
\end{itemize}    
for arbitrary free product von Neumann algebra. Those results in particular show that the resulting free product von Neumann algebra certainly `remembers' the given states, that is, the free product state is `special' in some sense. 

One more algebraic invariant related to full type III$_1$ factors was introduced by Shlyakhtenko \cite{Shlyakhtenko:TAMS05}. However we cannot yet deal with it. Also Connes' bicentralizer problem should be examined for free product von Neumann algebras. In fact, Houdayer \cite{Houdayer:PAMS09} showed that any type III$_1$ free Araki--Woods factor has the trivial bicentralizer. In the direction we can  confirm, by \cite[Corollary 3.2, Theorem 4.1]{Ueda:Preprint10} together with Haagerup's solution \cite{Haagerup:ActaMath86}, that the bicentralizer problem is affirmative for any type III$_1$ factor arising as (a direct summand of) free product of hyperfinite von Neumann algebras.  However we do not know whether or not the problem is affirmative in general. 

\section*{Acknowledgment} We thank the referee for pointing out a typo and giving a comment related to Proposition \ref{P-2.4}.
}

\end{document}